\newtheorem{theorem}{Theorem}
\newtheorem{proposition}[theorem]{Proposition}
\newtheorem{lemma}[theorem]{Lemma}
\newtheorem{corollary}[theorem]{Corollary}
\newcommand{\C}{\mathbb{C}}
\newcommand{\R}{\mathbb{R}}
\newcommand{\N}{\mathbb{N}}
\begin{document}

\title{Boggio's formula \\ for fractional polyharmonic Dirichlet problems}

\author{
Serena Dipierro \quad \& \quad Hans-Christoph Grunau  }
\date{} \maketitle

\begin{abstract}
Boggio's formula in balls is known for integer-polyharmonic Dirichlet problems 
and for fractional Dirichlet problems with fractional parameter less than~$1$. 
We give here a consistent formulation for fractional polyharmonic
Dirichlet problems such that Boggio's formula in balls 
yields solutions also for the general fractional case.
\end{abstract}
\renewcommand{\thefootnote}{}
\footnotetext{{\em Mathematics Subject Classification:}
35J40}

\section{Introduction}

Goal of this paper is to extend Boggio's  classical formula \cite[p. 126]{boggio}
for solutions to Dirichlet problems in the unit ball $B:=B_1(0)$ of~$\R^n$
to ``polyharmonic'' operators of any fractional order~$s>0$. In this context we give
a consistent definition for $(-\Delta)^s$ when applied to functions which are 
merely in $H^s(\mathbb{R}^n)\cap H^{2s} (B)$.
For operators of order~$s\in\N$, the formula of Boggio is a classical
tool to construct solutions of the $s$-polyharmonic equation 
with right hand side equal to~$f$
and subject to homogeneous boundary data. 
\medskip

More precisely, Boggio's  formula    (see also \cite[p. 51]{GGS}) states that
the polyharmonic Green function with $s\in\N$
in the unit ball $B=B_1(0)\subset\mathbb{R}^n$
is given by
\begin{eqnarray}
G_{s} (x,y)&:=&k_{s,n} |x-y|^{2s-n} \int_1^{\left| |x|y-\frac{x}{|x|}\right|/|x-y|}
\left( v^2-1\right)^{s-1}v^{1-n}\, dv,\label{eq:Boggioformula}\\
&&{\mbox{with}}\qquad
k_{s,n}:=\frac{1}{n e_n 4^{s-1} \Gamma(s)^2},
\qquad{\mbox{and}}\qquad
e_n:=\frac{\pi^{n/2}}{\Gamma(1+n/2)}
\label{eq:Boggioformulaconstant}
\end{eqnarray}
for $x,y\in \overline{B}$, $x\not=y$. This formula showed up also in potential
theory (see also~\cite{landkof, Riesz}) for $s\in(0,1)$. 
For a probabilistic point of view, see \cite{BGR, Kul}.
Recently, the Dirichlet problem
for the fractional Laplacian attracted quite some attention 
(see e.g. \cite{bucur, pro, DPV, SV-DCDS} and the references therein)
and it was shown in
Theorems~3.1 and~3.3 in~\cite{bucur} that Boggio's formula remains valid
also for~$s\in(0,1)$. It is a quite remarkable fact that such~$G_s$ has
a rather explicit expression, in terms of the fundamental solution
and a weighted integral containing a Kelvin transformation.\medskip

To sum up, this means that, defining
$$
u(x):=\int_{B}G_{s} (x,y)f(y)\, dy,
$$
one obtains the unique solution $u$ of the $s$-polyharmonic Dirichlet problem in the unit 
ball $B$, provided that
\begin{itemize}
\item $s\in \mathbb{N}$, see \cite{boggio};
\item $s\in (0,1)$, see \cite{bucur} and the references therein. 
\end{itemize}
We shall show that Boggio's formula holds true for any $s\in (0,\infty)$ and~$n\in\N$. 
To this end we write
\begin{equation}\label{smsigma}
s=m+\sigma,\qquad{\mbox{with}}\qquad m\in\mathbb{N}\qquad{\mbox{and}}\qquad 
\sigma\in (0,1),
\end{equation}
and define for $u\in H^s(\mathbb{R}^n)\cap H^{2s} (B)$
\begin{equation}\label{operator}
(-\Delta)^s u:=(-\Delta)^m (-\Delta)^\sigma u.
\end{equation}
Here, $(-\Delta)^\sigma u$ is the so-called fractional Laplacian, for which 
we use the usual nonlocal definition given, 
for instance, in~\cite{bucur,pro,DPV,dyda}:
\begin{equation}\label{eq:def_fract_Laplacian}
\begin{split}&
(-\Delta)^\sigma u (x):= C(n,\sigma)\lim_{\varepsilon\searrow 0}
\int_{\mathbb{R}^n \setminus B_\varepsilon(x)}
\frac{u(x)-u(y)}{|x-y|^{n+2\sigma}},\\ {\mbox{where }}\;&
C(n,\sigma):=\frac{4^\sigma \Gamma(\frac{n}{2}+\sigma)}{-\Gamma(-\sigma)\, \pi^{n/2}}.
\end{split}\end{equation}
The operator~$(-\Delta)^m$ has to be understood in the classical pointwise sense. 
In this regularity class
the order of applying $(-\Delta)^\sigma$ and $(-\Delta)^m$ in (\ref{operator}) 
is quite essential.

For functions $u$, which are even in $H^{2s} (\mathbb{R}^n)$, 
the operator~$(-\Delta)^s$ may be equivalently defined by using the Fourier transform, 
that is, for any~$\xi\in\R^n$,
\begin{equation}\label{fourier}
{\mathcal{F}} \left( (-\Delta)^s u\right)(\xi) = |\xi|^{2s}\hat{u}(\xi),   
\end{equation}
where
$$ ({\mathcal{F}}u)(\xi)=\hat{u}(\xi):=\frac{1}{(2\pi)^n}\int_{\mathbb{R}^n}e^{-i\xi\cdot x} 
u(x)\, dx$$
denotes the Fourier transform of~$u$ (see~\cite[Prop. 3.3]{DPV} and also \cite{rosoton}). 

\medskip 

In order to state our main result, we denote 
by~$\alpha=(\alpha_1, \dots,\alpha_n)\in \N^n$ a multi-index and by~$|\alpha|=\alpha_1 +
\dots +\alpha_n$ the length of~$\alpha$. 

We will show that the Green function 
introduced in~\eqref{eq:Boggioformula} can be used to get a solution~$u$ to 
the $s$-polyharmonic  Dirichlet problem in the following sense:
$$
(-\Delta)^s u=f \mbox{\ in\ }B, \qquad u\in H^s(\mathbb{R}^n).
$$
More precisely, the main result that we prove in this paper is the following: 

\begin{theorem} \label{TH}
Let~$s\in(0,\infty)$ and~$f\in C^\infty_0(B)$. Set
\begin{equation}\label{eq:solution}
u(x):=\left\{ \begin{array}{ll}
      \displaystyle         \int_{B}G_{s} (x,y)f(y)\, dy\qquad &\mbox{\ for\ } x\in B,\\
\,\\
            0\qquad &\mbox{\ for\ } x\not \in B.
              \end{array}
\right.
\end{equation}
Then~$u\in H^s(\R^n)$ is the unique solution to
\begin{equation}\label{sol f}
\left\{ 
\begin{matrix}
(-\Delta)^s u=f  & \mbox{\ in\ }B, \\
u=0  & \mbox{\ in\ }\R^n\setminus B.
\end{matrix} 
\right.
\end{equation}
In addition, $u\in C^{m,\sigma} (\overline{B}) \cap C^\infty (B)$ and 
\begin{equation}\label{addition}
u(x)=(1-|x|^2)_+^s \tilde u(x)
\end{equation}
with some $\tilde{u}\in C^\infty(\R^n)$.
\end{theorem}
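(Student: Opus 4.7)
My plan is to decompose $u$ as the successive action of the classical polyharmonic Green potential followed by the fractional one, and then identify the resulting kernel with Boggio's~$G_s$. Explicitly, set $v(y):=\int_B G_m(y,z)f(z)\,dz$; by the classical Boggio theorem, $v\in C^\infty(\overline B)$ with $v=\partial_\nu v=\dots=\partial_\nu^{m-1}v=0$ on $\partial B$ and $(-\Delta)^m v=f$ in $B$, so the zero extension of $v$ to $\R^n$ lies in $H^m(\R^n)$ with compact support in $\overline B$. The fractional Boggio theorem of~\cite{bucur}, applied to $v$ in the admissible range $\sigma\in(0,1)$, then produces $w(x):=\int_B G_\sigma(x,y)v(y)\,dy$ (extended by zero) which lies in $H^\sigma(\R^n)$ and satisfies $(-\Delta)^\sigma w=v$ in $B$ with $w\equiv 0$ outside $B$. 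A Fubini swap rewrites $w$ as $\int_B K(x,z)f(z)\,dz$ with $K(x,z):=\int_B G_\sigma(x,y)G_m(y,z)\,dy$.

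The heart of the argument, and the step I expect to be the main obstacle, is the Green-function composition identity
\begin{equation*}
G_{m+\sigma}(x,z)=\int_B G_\sigma(x,y)\,G_m(y,z)\,dy,\qquad x,z\in B,
\end{equation*}
which identifies $K$ with Boggio's kernel $G_s$ and therefore $w$ with the $u$ of~\eqref{eq:solution}. I would attempt this in two complementary ways. First, by induction on $m$: the case $m=0$ is a tautology; the step from $m$ to $m+1$ reduces, via the classical ball-composition $G_{m+1}(y,z)=\int_B G_1(y,\eta)G_m(\eta,z)\,d\eta$, to the single identity $\int_B G_\sigma(x,y)G_1(y,z)\,dy=G_{\sigma+1}(x,z)$, which I would close by checking that both sides solve the same fractional Dirichlet problem in $x$ and invoking the uniqueness half of~\cite{bucur}. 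Second, by direct computation: split Boggio's kernel~\eqref{eq:Boggioformula} into its Riesz-singular part $c_{s,n}|x-z|^{2s-n}$ and a smooth Kelvin-corrected remainder, use the classical Riesz composition formula in the appropriate range, and match the boundary corrections via the Kelvin symmetry visible in the upper integration limit of~\eqref{eq:Boggioformula}.

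Granted the composition identity, $u\equiv w$, so $(-\Delta)^\sigma u=v\in C^\infty(\overline B)$ pointwise and hence $(-\Delta)^s u=(-\Delta)^m v=f$ in $B$ in the sense of~\eqref{operator}. The boundary factorisation $u(x)=(1-|x|^2)_+^s\tilde u(x)$ with $\tilde u\in C^\infty(\R^n)$, together with the regularity $u\in C^{m,\sigma}(\overline B)\cap C^\infty(B)$, is read off from Boggio's explicit kernel: the change of variables $v=1+(1-|x|^2)t$ in~\eqref{eq:Boggioformula}, combined with the smoothness of $f$ and the fact that $f$ is compactly supported inside $B$, isolates the factor $(1-|x|^2)^s$ and leaves a remainder that extends smoothly across $\partial B$. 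Uniqueness in the class $\{\phi\in H^s(\R^n):\phi\equiv 0\text{ on }\R^n\setminus B\}$ follows from the energy identity: pairing a difference $\phi$ of two solutions with itself and integrating by parts iteratively (the necessary boundary traces vanish as a consequence of the zero-extension $H^s$-hypothesis) yields via Parseval that $\|\phi\|_{\dot{H}^s(\R^n)}^2=0$, whence $\phi\equiv 0$.
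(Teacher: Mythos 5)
Your plan breaks down at the step you yourself single out as its heart: the composition identity $G_{m+\sigma}(x,z)=\int_B G_\sigma(x,y)\,G_m(y,z)\,dy$ is \emph{false} for every $m\ge 1$. To see this, fix $z\in B$ and let $x\to\partial B$. From Boggio's formula \eqref{eq:Boggioformula}, the upper limit of integration tends to $1$ like $1+\tfrac{(1-|x|^2)(1-|z|^2)}{2|x-z|^2}$ and $\int_1^{1+\varepsilon}(v^2-1)^{s-1}v^{1-n}\,dv\asymp\varepsilon^{s}$, so $G_{m+\sigma}(x,z)\asymp (1-|x|^2)^{m+\sigma}$. By the same computation with $s$ replaced by $\sigma$, one has $G_\sigma(x,y)\gtrsim (1-|x|^2)^\sigma(1-|y|^2)^\sigma|x-y|^{-n}$ for fixed interior $y$, and since $G_\sigma,G_m\ge0$ there is no cancellation: by Fatou, $\int_B G_\sigma(x,y)G_m(y,z)\,dy\gtrsim (1-|x|^2)^{\sigma}$ as $x\to\partial B$. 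Since $m\ge1$, the two sides vanish at different rates at $\partial B$, so $K\neq G_s$ and your $w\neq u$. The structural reason is visible in Lemma~\ref{lem:G_delta_sigma_equals_polyharmonic_Green} of the paper: $(-\Delta)^\sigma \tilde G_s$ is \emph{not} a multiple of $\tilde G_m$, but a multiple of $\tilde G_m$ plus a nontrivial polynomial $\sum_{k=0}^{m-1}a_k(1-r^2)^k$. That polynomial is annihilated by $(-\Delta)^m$, so Boggio's $u$ still satisfies $(-\Delta)^m(-\Delta)^\sigma u=f$; but it means the intermediate function $(-\Delta)^\sigma u$ carries nonzero Dirichlet data of order $m$, whereas your intermediate function is exactly $v$, which carries none.

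Consequently your $w$ is a genuine solution of $(-\Delta)^m(-\Delta)^\sigma w=f$ in $B$ with $w\equiv0$ outside $B$, but with the ``wrong'' (Navier-type) intermediate boundary conditions: it behaves like $(1-|x|^2)^\sigma$ at $\partial B$, hence its zero extension is not in $H^{m+\sigma}(\R^n)$ (an extension by zero of a $d^\sigma$-profile lies in $H^t$ only for $t<\sigma+\tfrac12<s$), so the uniqueness-in-$H^s$ argument you invoke at the end cannot identify $w$ with $u$ --- indeed for $f\ge0$, $f\not\equiv0$ they differ. This also defeats both of your proposed verifications of the identity: in the induction step, $x\mapsto G_{\sigma+1}(x,z)$ does \emph{not} solve $(-\Delta)^\sigma(\cdot)=c\,G_1(\cdot,z)$ (by Lemma~\ref{lem:G_delta_sigma_equals_polyharmonic_Green} it does so only up to a nonzero additive correction, a constant at the pole $0$), so the two candidates in your uniqueness argument do not solve the same problem; and the Riesz composition formula, valid on all of $\R^n$, cannot be matched with the Kelvin boundary corrections for the same asymptotic reason. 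The phenomenon is the fractional analogue of the clamped-versus-hinged plate: composing the solution operators of $(-\Delta)^\sigma$ and $(-\Delta)^m$ in the ball inverts the operator subject to intermediate boundary conditions, not the $H^s(\R^n)$-Dirichlet problem \eqref{sol f}, and the membership $u\in H^s(\R^n)$ is precisely the boundary condition that makes Theorem~\ref{TH} work. This is why the paper proceeds differently: it computes $(-\Delta)^\sigma\tilde G_s$ via Dyda's formula and a hypergeometric rearrangement, identifies it as $c\,\tilde G_m$ \emph{plus} the polynomial correction, then transports the pole by M\"obius covariance proved through analytic continuation in $s$, and finally reads off the $(1-|x|^2)_+^s$-factorisation and the $H^s$-membership directly from the kernel.
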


Once, Boggio's solution is proved to belong to 
a unique class, then it coincides with any solution
obtained by variational or functional analytic
methods and the abstract theory developed by~\cite{rosoton, SV-DCDS} applies.

We emphasise that in order to interpret \eqref{sol f} in a strong sense, the order of
differentiation as defined in \eqref{operator} is essential, i.e. 
$(-\Delta)^m ((-\Delta)^\sigma u)=f$ in $B$. Indeed, Theorem~\ref{TH} shows 
the consistency of this definition: the fractional polyharmonic operator
should be defined such that the function given by Boggio's formula yields
the unique solution of the corresponding Dirichlet problem.

As a biproduct of Theorem~\ref{TH} and the strict positivity of Boggio's 
Green function we have the following positivity preserving property:
\begin{quote}
 Let $f$ and $u$ be as in Theorem~\ref{TH}. Then $f\ge 0$, $f\not\equiv 0$ implies that
$u>0$ in $B$.
\end{quote}
It is known that such a property fails in general domains for integer $s\ge 2$, 
see the discussion
and the references in \cite{GGS}. While the present paper was submitted and
under review, Abatangelo,  Jarohs and Salda\~{n}a sent us their preprint \cite{AJS}
where precisely this question is studied for~$s>1$. Independently and (almost) simultaneously
they achieved among others the same result as in our Theorem~\ref{TH}, but their 
approach is quite different.

This type of fractional higher order operators plays an important role in analysis, 
see~\cite{sjo, samko, cots, katz, EIK, tao, grubb, miao}, 
in geometry, see~\cite{chang, GZ, jin}, and in some applications, see~\cite{magin, zhu}. 
See in particular~\cite{rosoton}, where Pohozhaev identities for higher order fractional Laplacians 
have been obtained. 
\medskip 

The Boggio-type formula obtained in Theorem~\ref{TH}
is in perfect agreement with the cases~$s\in(0,1)\cup\N$,
which were already known, but the extension to the whole
interval~$s\in(0,+\infty)$ is based on appropriate series expansions
and analytic continuation.
\medskip

The paper is organised as follows. In Section~\ref{sec:mob} we show the 
covariance under M\"obius transformations of the operator introduced in~\eqref{operator}. 
In Section~\ref{sec:green} we provide an expression for the fractional Laplacian 
of order~$\sigma\in(0,1)$ of the Green function as given by formula~\eqref{eq:Boggioformula}
with pole at the origin. This will imply that the function in~\eqref{eq:Boggioformula}
is the Green function for the operator~$(-\Delta)^s$ with pole at the origin. 
Here we strongly rely on fractional differentiation 
results due to Dyda~\cite{dyda}, which were 
developed further by him, Kuznetsov, and Kwa\'{s}nicki in the 
recent work~\cite{DKK}.

Then, by the covariance under M\"obius transformations, we will obtain the Green function 
for the fractional Laplacian of any order~$s\in(0,\infty)$ at any pole. 

In Section~\ref{sec:proof} we complete the proof of Theorem~\ref{TH}. 

\section{Covariance under M\"obius transformations}\label{sec:mob}

In this section we show that the operator introduced in~\eqref{operator} is 
covariant under M\"obius transformations. This property holds true for polyharmonic operators, 
see e.g. Lemma~6.14 in~\cite{GGS}, and for the fractional Laplacian, see e.g. Lemma~2.2 and Corollary~2.3 
in~\cite{FW} and Proposition~A.1 in~\cite{ROS}. 

The extension of these covariance properties to operators of any fractional powers
was not available in the literature (to the best of our knowledge)
and we obtain it by using complex analysis methods and unique
continuation of analytic functions, given that the desired formulas hold
in a nontrivial interval in the reals. To this aim, we use the
notation~${\mathcal{S}}$ to denote the Schwartz space of smooth
and rapidly decreasing functions and we have the following result:

\begin{lemma}\label{COMPLESSO}
Fix~$x\in\R^n$ and~$v\in {\mathcal{S}}$.
Then, the map
$$ {\mathcal{T}}_{v,x} (s):= (-\Delta)^s v(x)$$
is analytic for~$s\in (0,+\infty)$.
\end{lemma}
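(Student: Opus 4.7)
The plan is to reduce everything to the Fourier characterisation of $(-\Delta)^s$ given by (\ref{fourier}). Since $v\in\mathcal{S}$ belongs to $H^{2s}(\R^n)$ for every $s>0$, this description applies, and for Schwartz data it is routinely seen to agree with the pointwise definition (\ref{operator}): the rapid decay of $\hat v$ makes $(-\Delta)^\sigma v=\mathcal{F}^{-1}(|\xi|^{2\sigma}\hat v)$ a smooth bounded function on which the classical Laplacian $(-\Delta)^m$ can be exchanged with the inverse Fourier transform. Consequently, one may write
$$
\mathcal{T}_{v,x}(s)=\int_{\R^n} e^{ix\cdot\xi}\,|\xi|^{2s}\,\hat v(\xi)\,d\xi,
$$
a formula in which $s$ appears only through $|\xi|^{2s}=\exp(2s\log|\xi|)$, an entire function of a complex variable $s$ for each fixed $\xi\neq 0$.

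I would then prove the stronger statement that $\mathcal{T}_{v,x}$ admits a holomorphic extension to the complex right half-plane $\Omega:=\{s\in\C\colon\mathrm{Re}\,s>0\}$. For any compact set $K\subset\Omega$, set $a_K:=\max_{s\in K}\mathrm{Re}\,s$. Then, for $s\in K$ and every $\xi\in\R^n$, the integrand is dominated by
$$
|\xi|^{2\mathrm{Re}\,s}\,|\hat v(\xi)|\le\bigl(1+|\xi|^{2a_K}\bigr)\,|\hat v(\xi)|,
$$
and the right-hand side is integrable on $\R^n$ because $\hat v\in\mathcal{S}$. The standard criterion for holomorphic dependence of parametric integrals (Morera combined with Fubini, or equivalently differentiation under the integral sign) now yields holomorphy of $\mathcal{T}_{v,x}$ on $\Omega$. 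In particular, its restriction to the positive real axis is real analytic, which is the content of the lemma.

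The only delicate point is the preliminary identification of (\ref{operator}) with (\ref{fourier}) on the Schwartz class; this is what allows the Fourier picture to be used uniformly across all values of $s$, including the integer ones at which the splitting $s=m+\sigma$ with $\sigma\in(0,1)$ would otherwise break down. Once this is settled, no singular-integral estimate is needed: the argument is driven entirely by the Schwartz decay of $\hat v$, which tames the growth of $|\xi|^{2s}$ at infinity and leaves no singularity at $\xi=0$ for $\mathrm{Re}\,s>0$. The main obstacle is therefore bookkeeping rather than hard analysis.
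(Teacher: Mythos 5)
Your proof is correct and follows essentially the same route as the paper: both identify $\mathcal{T}_{v,x}(s)$ with the Fourier-side integral $\int_{\R^n}e^{ix\cdot\xi}\,|\xi|^{2s}\,\hat v(\xi)\,d\xi$ (using that on the Schwartz class the definition \eqref{operator} agrees with the multiplier characterisation \eqref{fourier}) and then prove holomorphy of this integral on the half-plane $\{\Re s>0\}$, from which real analyticity on $(0,+\infty)$ follows. The only difference is technical: the paper verifies complex differentiability by an explicit difference-quotient estimate based on $|e^z-1-z|\le |z|^2e^{|z|}$, whereas you invoke the standard Morera--Fubini (or dominated differentiation) criterion with the locally uniform integrable majorant $\bigl(1+|\xi|^{2a_K}\bigr)\,|\hat v(\xi)|$, which is equally valid.
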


\begin{proof} We use complex analysis, so we will take~$s
\in\C$, with~$\Re s>0$.
We define
\begin{equation}\label{fourier2} 
w(s): = \int_{\R^n} e^{ix\cdot\xi}\, |\xi|^{2s} \,\hat v(\xi)\,d\xi .\end{equation}
We observe that, for any~$z\in\C$,
\begin{eqnarray*}
&& |e^z-1-z| =\left| \sum_{k=2}^{+\infty}\frac{z^k}{k!}\right|
\le \sum_{k=2}^{+\infty}\frac{|z|^k}{k!} 
= |z|^2 \sum_{k=2}^{+\infty}\frac{|z|^{k-2}}{k!} \\
&&\qquad \le|z|^2 \sum_{k=2}^{+\infty}\frac{|z|^{k-2}}{(k-2)!}
= |z|^2 e^{|z|}.
\end{eqnarray*}
So, for any~$h\in\C$,
we use this formula with~$z:= 2h\log|\xi|$ and we obtain
\begin{eqnarray*}
&& \Big| |\xi|^{2(s+h)}-|\xi|^{2s} -2h |\xi|^{2s}\,\log|\xi| \Big|
= |\xi|^{2\Re s}
\Big| |\xi|^{2h}-1-2h\log|\xi| \Big| \\
&&\qquad= |\xi|^{2\Re s}
\Big| e^{ 2h\log|\xi| } - 1-2h\log|\xi| \Big|
\le 4|h|^2 |\xi|^{2\Re s} \log^2 |\xi| \,e^{2|h|\,|\,\log|\xi|\,|}.
\end{eqnarray*}
Now we observe that
$$ e^{|\,\log|\xi|\,|} = \begin{cases}
|\xi| & {\mbox{ if }} |\xi|\ge 1,\\
|\xi|^{-1} & {\mbox{ if }} |\xi|<1,
\end{cases} $$
and so, if~$|h|< \Re s/2$,
$$ e^{2|h|\,|\,\log|\xi|\,|}\le |\xi|^{\Re s} +|\xi|^{-\Re s}.$$
Hence, we have found that
$$ \Big| |\xi|^{2(s+h)}-|\xi|^{2s} -2h |\xi|^{2s}\log|\xi| \Big|
\le 4|h|^2\,\big( |\xi|^{3\Re s}+ |\xi|^{\Re s} \big)\,\log^2 |\xi| .$$
In consequence of this, for any~$h\in\C$ with~$0<|h|< \Re s/2$,
\begin{eqnarray*}
&& \left| \frac{w(s+h)-w(s)}{h}
-2\int_{\R^n} e^{ix\cdot\xi}\, |\xi|^{2s}\,\log|\xi| \,\hat v(\xi)\,d\xi
\right|
\\ &&\qquad=\left| \frac{1}{h}
\int_{\R^n} e^{ix\cdot\xi}\,\hat v(\xi)\,
\Big( |\xi|^{2(s+h)}-|\xi|^{2s} -2h |\xi|^{2s}\,\log|\xi|\Big)\,d\xi\right|\\
&&\qquad \le 4|h|\,
\int_{\R^n} |\hat v(\xi)|\,
\big( |\xi|^{3\Re s}+ |\xi|^{\Re s} \big)\,\log^2 |\xi|\,d\xi\le
C\,|h|,
\end{eqnarray*}
for some~$C>0$ (possibly depending on~$n$, $s$ and~$v$).
By sending~$h\to0$, we then obtain that~$w$ is differentiable
in the complex sense, and so analytic in~$\{\Re s>0\}$,
which gives the desired result by comparing~\eqref{fourier} and~\eqref{fourier2}.
\end{proof}

With the aid of the latter result, we can now
prove the desired covariance under M\"obius transformations:

\begin{lemma}\label{le:mob}
Let~$\phi$ be a smooth M\"obius transformation 
in~$\R^n\setminus \{ x_0\}$ and let~$J_\phi$ be the modulus 
of the Jacobian determinant. 

Then, for any~$s\in(0,\infty)$ and any~$u\in C^\infty_0(\R^n\setminus\{x_0\})$, 
\begin{equation}\label{jac}
(-\Delta)^s \left( J_\phi^{\frac{1}{2}-\frac{s}{n}} u \circ \phi\right) 
= J_\phi^{\frac{1}{2}+\frac{s}{n}} \big( (-\Delta)^s u\big) \circ \phi.\end{equation} 
\end{lemma}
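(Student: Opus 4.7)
The plan is to extend the identity \eqref{jac} from the known base case $s\in(0,1)$ to arbitrary $s>0$ by analytic continuation in the parameter $s$, using Lemma~\ref{COMPLESSO} as the crucial analyticity tool.

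Since $u\in C_0^\infty(\R^n\setminus\{x_0\})$ and $\phi$ is a smooth diffeomorphism of $\R^n$ minus a point, both $u\circ\phi$ and the power $J_\phi^{1/2-s/n}$ are smooth on $\operatorname{supp}(u\circ\phi)$, and the auxiliary function $v_s:=J_\phi^{1/2-s/n}\,u\circ\phi$ lies in $C_0^\infty(K)$ for a fixed compact set $K\subset\R^n$ avoiding all singular points of $\phi$. In particular both sides of \eqref{jac} are well defined and admit the Fourier representation \eqref{fourier}. I now fix $x$ and let $s$ range over the right half-plane $\{\Re s>0\}$. The right-hand side factors as $J_\phi(x)^{1/2}\,e^{(s/n)\log J_\phi(x)}\cdot((-\Delta)^s u)(\phi(x))$: the first factor is entire in $s$, and the second is analytic on $\{\Re s>0\}$ by Lemma~\ref{COMPLESSO} applied at the point $\phi(x)$.

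For the left-hand side I would adapt the proof of Lemma~\ref{COMPLESSO} to the $s$-dependent family $v_s$. On the compact set $K$ the quantity $\log J_\phi$ is bounded by some $M$; writing $v_s=(J_\phi^{1/2}\,u\circ\phi)\cdot e^{-(s/n)\log J_\phi}$ and applying the same elementary bound on $e^z-1-z$ that appears in the proof of Lemma~\ref{COMPLESSO} (now with $z=-(h/n)\log J_\phi$) shows that every derivative $\partial^\alpha v_s$ depends holomorphically on $s$ with bounds uniform on $K$ and on compact subsets of $\{\Re s>0\}$. Integration by parts then yields the Schwartz-type bound $|\widehat{v_s}(\xi)|\le C_N(1+|\xi|)^{-N}$, again uniformly on such compact sets, while $\widehat{v_s}(\xi)$ is holomorphic in $s$ for each fixed $\xi$. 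Substituting in the Fourier representation
$$W(s):=\int_{\R^n}e^{ix\cdot\xi}\,|\xi|^{2s}\,\widehat{v_s}(\xi)\,d\xi$$
and repeating the difference-quotient argument of Lemma~\ref{COMPLESSO} verbatim shows that $W(s)$ is complex differentiable, hence holomorphic, in $s$ on $\{\Re s>0\}$.

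Finally, the conformal covariance of the fractional Laplacian for $\sigma\in(0,1)$ (see \cite{FW,ROS}) guarantees that \eqref{jac} holds for every $s\in(0,1)$. Since this real interval has accumulation points inside the connected open set $\{\Re s>0\}$ on which both sides of \eqref{jac} are holomorphic, the identity theorem for holomorphic functions yields \eqref{jac} for every $s$ with $\Re s>0$, and in particular for every $s\in(0,\infty)$. The main obstacle is exactly the holomorphic dependence of the left-hand side on $s$: because $s$ enters both through the order of the operator and through the conformal weight, one has to control two difference quotients simultaneously, and this is possible only because $\log J_\phi$ is bounded on the fixed compact support $K$.
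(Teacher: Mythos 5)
Your argument is correct and rests on the same mechanism as the paper's proof: holomorphy in $s$ of both sides of \eqref{jac} at fixed $x$ (via the Fourier representation underlying Lemma~\ref{COMPLESSO}), the known covariance for $s\in(0,1)$ as base case, and the identity theorem on $\{\Re s>0\}$. The structural difference is that the paper first invokes Reshetnyak's decomposition \cite{Reshe} to write $\phi$ as similarities composed with the unit-sphere inversion, disposes of translations and rotations trivially (Jacobian $1$), and then runs the analytic continuation only for the inversion, where the $s$-dependent weight is the explicit Kelvin factor $|x|^{2s-n}$ and the base case is literally the cited Kelvin-transform identities of \cite{FW,ROS}. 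You instead continue analytically for the general M\"obius map at once, which forces you to handle the weight $J_\phi^{\frac12-\frac{s}{n}}$ uniformly; your observation that $\log J_\phi$ is bounded on the fixed compact support, so that $\hat v_s(\xi)$ is holomorphic in $s$ with locally uniform Schwartz bounds, is exactly the right point, and it is treated at the same level of detail as the paper's corresponding remark for $|x|^{2s-n}$. The price of your route is that the base case must be the covariance for a \emph{general} M\"obius map at $s\in(0,1)$; this is indeed available (e.g.\ Corollary~2.3 in \cite{FW}, as the paper itself notes at the start of its Section~2), or can be recovered from the inversion case by composing with similarities, but you should cite it in that form rather than only the Kelvin-transform statements. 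What each approach buys: the paper's reduction keeps the weight explicit and the citations verbatim, at the cost of the decomposition and composition bookkeeping (including the dilation case); your direct argument is shorter and treats all M\"obius maps on an equal footing, at the cost of a slightly heavier uniform-holomorphy estimate for the general weight.
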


\begin{proof} 
We recall that any M\"obius transform~$\phi$ 
can be seen as a finite combination of similarities and inversions. 
In particular, from Corollary~4 on page~39 of~\cite{Reshe} we have that 
we can write~$\phi=\phi_1\circ j\circ \phi_2$, where~$j$ is an inversion, 
i.e~$j(x)=|x|^{-2}x$, and~$\phi_1$, $\phi_2$ are similarities, 
i.e.~$\phi_i(x)=a_i+c_iF_ix$, for~$i=1,2$, 
with~$c_i>0$, $a_i\in\R^n$ and~$F_i$ an orthogonal matrix. 
Therefore, 
\begin{equation}\label{discussion}
{\mbox{it suffices to show~\eqref{jac} for translations, rotations and inversions.}}
\end{equation}

Notice that formula~\eqref{jac} is easily verified if~$\phi$ is a translation or a rotation, 
since in these cases the Jacobian is equal to~$1$, and therefore, 
using the invariance under translations and rotations of 
polyharmonic operators and fractional Laplacians, 
\begin{eqnarray*}
&&(-\Delta)^s\left(u \circ \phi\right) = (-\Delta)^m \left[ 
(-\Delta)^\sigma\left(u \circ \phi\right)\right] 
= (-\Delta)^m \left[ \left((-\Delta)^\sigma u\right) \circ \phi\right]\\
&&\qquad =\left[ (-\Delta)^m (-\Delta)^\sigma u \right]\circ \phi 
= \left( (-\Delta)^s u\right) \circ\phi,
\end{eqnarray*}
as desired. 

Now we focus on the case in which~$\phi$ is the inversion with respect to the unit sphere.   
Namely, we consider the transformation that associates to any~$x\in\R^n\setminus \{0\}$ 
the point~$x^*:=x/|x|^2\in\R^n\setminus \{0\}$ 
and we observe that~$J_\phi(x)=|x|^{-2n}$. Hence we set
$$ 
u^*(x):=J_\phi^{\frac{1}{2}-\frac{s}{n}}(x)\, u(x^*) = \frac{1}{|x|^{n-2s}}u(x^*)
 $$
and we have that, according to our assumption, $u^*\in C^\infty_0(\R^n\setminus \{0\})$.
With this notation, we claim that
\begin{equation}\label{kelvin}
(-\Delta)^s u^*(x) = \frac{1}{|x|^{n+2s}} (-\Delta)^s u(x^*).  
\end{equation}
To prove this, 
use the notation introduced in Lemma~\ref{COMPLESSO}
to say that~\eqref{kelvin} is equivalent to
$$ F(s):={\mathcal{T}}_{u^*,x}(s) -  \frac{1}{|x|^{n+2s}} \;
{\mathcal{T}}_{u,x^*}(s)=0.$$
We remark first that, thanks to our smoothness and compact support assumptions,
$$
\hat{v} (s,\xi) :=\frac{1}{(2\pi)^n}\int_{\mathbb{R}^n}e^{-i\xi\cdot x} 
|x|^{2s-n}u\left(\frac{x}{|x|^2}\right)\, dx
$$ 
depends holomorphically on $s$ in $\Re s>0$. 
The smoothness and fast decay of $\xi\mapsto \hat{v} (s,\xi) $
is locally uniform with respect to $s$. Hence,
$F$ is analytic when~$s\in(0,+\infty)$, thanks to
Lemma~\ref{COMPLESSO}. Also, $F(s)=0$ for any~$s\in(0,1]$,
since~\eqref{kelvin} is known for this range of parameters~$s$
(see
Lemma~2.2 and Corollary~2.3 
in~\cite{FW} and Proposition~A.1 in~\cite{ROS}).
Then, by analytic unique continuation, we conclude that~$F$ vanishes
identically in~$(0,+\infty)$, which proves~\eqref{kelvin}. Then the proof of Lemma~\ref{le:mob} is also completed, 
thanks to~\eqref{discussion}.
\end{proof}

\section{Green function with pole at the origin}\label{sec:green}

In this section we focus on the case in which the pole for the Green function
is the origin. For this, we define, for~$r\in (0,\infty)$,
\begin{equation}\label{eq:aux_funct}
 \tilde{G}_s (r):=\left\{\begin{array}{ll}
  \displaystyle   r^{2s-n}\int_1^{1/r} (v^2-1)^{s-1} v^{1-n}\, dv  
\quad & \quad \mbox{\ if\ }r\in(0,1],\\[2mm]
\displaystyle    0 \quad & \quad \mbox{\ if\ }r\in(1,\infty)
                         \end{array}
\right.
\end{equation}
so that
$$
G_{s} (0,y)=k_{s,n}\tilde{G}_s (|y| ) ,\quad y\in\overline{B}\setminus \{0\}.
$$
We will prove that~$\tilde{G}_s$ (suitably renormalised) is the Green function for~$(-\Delta)^s$
with pole at the origin. 
More precisely, we prove that: 
\begin{proposition}\label{prop:pole}
For~$r=|y|\in(0,1)$ we have that
\begin{equation}\label{po1}
(-\Delta)^s \tilde{G}_s (|y|) =\big(k_{s,n}\big)^{-1} \delta_0(y), 
\end{equation}
where~$k_{s,n}$ is given by formula~\eqref{eq:Boggioformulaconstant}.
\end{proposition}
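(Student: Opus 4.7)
Decompose $s=m+\sigma$ with $m\in\mathbb{N}$ and $\sigma\in(0,1)$. The case $m=0$, i.e.\ $s\in(0,1)$, is already contained in \cite{bucur}, so the essential content is $m\geq 1$. Since $(-\Delta)^s$ is defined in \eqref{operator} by applying first $(-\Delta)^\sigma$ and then the local operator $(-\Delta)^m$, the plan is to reduce \eqref{po1} to Boggio's classical integer-polyharmonic result by establishing the intermediate identity
\begin{equation}\label{intermediate}
(-\Delta)^\sigma \tilde G_s(|y|)=\frac{k_{m,n}}{k_{s,n}}\,\tilde G_m(|y|)\qquad\mbox{for every }y\in B\setminus\{0\}.
\end{equation}
Granted \eqref{intermediate}, applying the local operator $(-\Delta)^m$ pointwise and invoking Boggio's classical formula (which yields $(-\Delta)^m \tilde G_m=k_{m,n}^{-1}\delta_0$ as distributions on $B$) produces exactly $(-\Delta)^s\tilde G_s=k_{s,n}^{-1}\delta_0$, which is \eqref{po1}.

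To prove \eqref{intermediate} I would recast $\tilde G_s$ in a form that the Dyda calculus can digest. Substituting $v^2=1+w$ in \eqref{eq:aux_funct} gives
\begin{equation*}
\tilde G_s(r)=\frac{r^{2s-n}}{2}\int_0^{(1-r^2)/r^2} w^{s-1}(1+w)^{-n/2}\,dw,
\end{equation*}
and a further change of variables expresses $\tilde G_s(|y|)$ as $|y|^{2s-n}(1-|y|^2)_+^{s}$ multiplied by a Gauss hypergeometric series in $1-|y|^2$. Radial functions of precisely this type are the ones for which \cite{dyda,DKK} provide a closed-form expression for the fractional Laplacian inside the ball. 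Applying their formula term-by-term to the hypergeometric expansion and simplifying (via Chu--Vandermonde or a suitable contiguous hypergeometric identity) should collapse the resulting series onto the analogous representation of $\tilde G_m$, multiplied by the $\Gamma$-factor ratio $k_{m,n}/k_{s,n}$.

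The principal obstacle is this explicit hypergeometric bookkeeping: matching the Dyda--Kuznetsov--Kwa\'snicki output to the integer-order Boggio profile and tracking the $\Gamma$-factors so that the constant really comes out as $k_{m,n}/k_{s,n}=4^{\sigma}\,\Gamma(s)^2/\Gamma(m)^2$. A secondary, milder issue is upgrading \eqref{intermediate} from a pointwise identity on $B\setminus\{0\}$ to a distributional identity on $B$, so that the subsequent application of $(-\Delta)^m$ still produces a delta at the origin with the correct weight. This upgrade is automatic because both sides of \eqref{intermediate} are radial and locally integrable near the origin (the singularity is of order $|y|^{2m-n}$, integrable since $m\geq 1$), so any distributional discrepancy is supported at $\{0\}$, radial, and of homogeneity strictly larger than $-n$, hence vanishes.
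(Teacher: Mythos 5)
Your overall route is the same as the paper's: decompose $s=m+\sigma$, expand $\tilde G_s$ in powers of $1-r^2$ (your substitution $v^2=1+w$ leads, after integrating term by term, exactly to the expansion $\tilde G_s(r)=\sum_{k\ge0}\frac{(n/2)_k}{2\,(s)_{k+1}}(1-r^2)_+^{k+s}$ of Lemma~\ref{lem:series_exp_G}), apply Dyda's fractional differentiation formula termwise, and finally let the local operator $(-\Delta)^m$ act and quote Boggio's integer-order result. The genuine gap is the intermediate identity on which your whole reduction rests, namely
\[
(-\Delta)^\sigma\tilde G_s(|y|)=\frac{k_{m,n}}{k_{s,n}}\,\tilde G_m(|y|)\qquad\text{in }B\setminus\{0\},
\]
which is \emph{false} for every $m\ge1$. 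What is true (Lemmata~\ref{lem:series_exp_G_delta_sigma} and~\ref{lem:G_delta_sigma_equals_polyharmonic_Green}) is that $(-\Delta)^\sigma\tilde G_s$ equals $\frac{k_{m,n}}{k_{s,n}}\tilde G_m$ \emph{plus} a polynomial remainder, a constant multiple of $\sum_{k=0}^{m-1}a_k(1-|y|^2)^k$, of degree at most $2m-2$ in $y$; the hypergeometric series does not collapse onto $\tilde G_m$ alone, so the ``bookkeeping'' you defer cannot deliver the identity you aim at. That the remainder is genuinely there can be seen at the boundary: $\tilde G_m(r)\to0$ as $r\to1^-$, whereas by Chu--Vandermonde the $k$-th summand of $G^\sharp_s$ in \eqref{eq:def_G_hat} evaluated at $r=1$ equals $\frac{(n/2)_k}{(m+1)_k}\cdot\frac{(-\sigma)_{k+m}}{(n/2)_{k+m}}$, and for $\sigma\in(0,1)$, $m\ge1$ all of these are strictly negative, so their convergent sum is nonzero; hence $(-\Delta)^\sigma\tilde G_s$ does not vanish at $\partial B$ and cannot be a constant multiple of $\tilde G_m$. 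For the same reason your closing argument does not rescue the step: the discrepancy is not a distribution supported at the origin but a polynomial living on all of $B$.

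The gap is repairable, and the repair is precisely the paper's proof: carry the polynomial remainder along, observe that $(-\Delta)^m$ annihilates it because its degree is strictly less than $2m$, and apply $(-\Delta)^m\tilde G_m=(k_{m,n})^{-1}\delta_0$ to the surviving term; your constant $k_{m,n}/k_{s,n}=4^{\sigma}\Gamma(s)^2/\Gamma(m)^2$ is indeed the correct coefficient of $\tilde G_m$, and with it one recovers \eqref{po1}. But producing the decomposition \emph{with} the remainder is the substantive work of Lemma~\ref{lem:G_delta_sigma_equals_polyharmonic_Green}: one must first transform the hypergeometric polynomials to the variable $1-r^2$ (so that all but finitely many coefficients have a fixed sign and the double series may be rearranged) and then apply Gauss's summation theorem; a naive term-by-term matching against the integer-order profile, as sketched, would run into exactly the nonvanishing boundary terms described above.
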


Then the general case (that is when the pole is not the origin) will follow 
from the covariance under M\"obius transformation of the operator~$(-\Delta )^s$, 
as stated in Section~\ref{sec:mob}. 
Proposition~\ref{prop:pole} gives a precise 
statement of what is outlined in a recent work of Dyda, Kuznetsov, and Kwa\'{s}nicki,
see \cite[Remark 1]{DKK}. One should, however, observe that the Green's functions
mentioned there were obtained before only for $s\in(0,1)$.
\smallskip 

In order to prove Proposition~\ref{prop:pole} 
we shall deduce a series expansion for $\tilde{G}_s$, where we can rely on 
calculations by Dyda \cite{dyda} in order to compute~$(-\Delta )^\sigma_y G_{s} (0,y)$. 
For this purpose we recall the definition of the
{\it Pochhammer} symbols:
\begin{equation}\label{eq:pochhammer}
 (a)_k:=\prod^{k-1}_{j=0} (a+j),\qquad a\in\mathbb{R}, \quad k\in \mathbb{N}_0.
\end{equation}
In what follows we perform calculations 
with hypergeometric series which are related
to those by Bucur~\cite[Section 3]{bucur} and by Dyda, Kuznetsov, and Kwa\'{s}nicki~\cite{DKK}.
We start by writing~$\tilde{G}_s$ in a useful way for our computations. 

\begin{lemma}\label{lem:series_exp_G}
 For $r\in(0,\infty)$ we have that
\begin{equation}\label{eq:series_exp_G}
 \tilde{G}_s (r) =\sum^\infty_{k=0} \frac{\left(\frac{n}{2}\right)_k}{2 (s)_{k+1}} (1-r^2)_+^{k+s}.
\end{equation}
\end{lemma}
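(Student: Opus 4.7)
\textbf{Proof plan for Lemma~\ref{lem:series_exp_G}.}

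For $r\ge 1$ both sides vanish (the left-hand side by definition in \eqref{eq:aux_funct}, the right-hand side because $(1-r^2)_+=0$), so the content of the lemma lies in the regime $r\in(0,1)$. The strategy is to recast the defining integral in a form to which a direct binomial expansion and a Beta-function evaluation apply termwise.

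First I would change variables in the integral defining $\tilde{G}_s(r)$ so as to remove the factor $r^{2s-n}$ and turn the integrand into a power times a shifted negative power. Concretely, set $w=v^2-1$ (so $v=\sqrt{1+w}$, $dv=dw/(2\sqrt{1+w})$) and then $w=t/r^2$, to reach the identity
\begin{equation*}
\tilde{G}_s(r) \;=\; \frac{1}{2}\int_0^{1-r^2} t^{s-1}\,(r^2+t)^{-n/2}\,dt, \qquad r\in(0,1).
\end{equation*}
This is routine algebra; the powers of $r$ conveniently cancel.

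Next, since $0<r^2+t\le 1<2$ on the range of integration, I would expand the factor $(r^2+t)^{-n/2}=\bigl(1-((1-r^2)-t)\bigr)^{-n/2}$ as a binomial series, using the identity $(-1)^k\binom{-n/2}{k}=(n/2)_k/k!$:
\begin{equation*}
(r^2+t)^{-n/2} \;=\; \sum_{k=0}^{\infty} \frac{(n/2)_k}{k!}\,\bigl((1-r^2)-t\bigr)^k .
\end{equation*}
All terms are nonnegative, so Tonelli's theorem legitimises the termwise integration. Substituting $t=(1-r^2)\tau$ inside the $k$-th integral and invoking the Beta integral $\int_0^1\tau^{s-1}(1-\tau)^k\,d\tau = B(s,k+1)=k!/(s)_{k+1}$ I would obtain
\begin{equation*}
\tilde{G}_s(r) \;=\; \sum_{k=0}^{\infty}\frac{(n/2)_k}{2\,k!}\,(1-r^2)^{s+k}\cdot\frac{k!}{(s)_{k+1}}
\;=\;\sum_{k=0}^{\infty}\frac{(n/2)_k}{2\,(s)_{k+1}}\,(1-r^2)^{s+k},
\end{equation*}
which is \eqref{eq:series_exp_G} on $(0,1)$; together with the trivial identity on $[1,\infty)$ this yields the lemma.

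Since each step is essentially a bookkeeping calculation, there is no real obstacle — the only thing that needs some care is the choice of changes of variables to arrive at the clean form $\tfrac{1}{2}\int_0^{1-r^2}t^{s-1}(r^2+t)^{-n/2}\,dt$, and the verification that the series expansion in $t$ converges on the full integration range so that interchange of sum and integral (via Tonelli) is justified. Equivalently, one could recognise the integral after the substitution $t=(1-r^2)\tau$ as the Euler integral representation of $\tfrac{(1-r^2)^s}{2s}\,{}_2F_1(n/2,1;s+1;1-r^2)$ and expand the hypergeometric series; both routes give the same identity.
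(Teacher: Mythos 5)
Your argument is correct, and it takes a genuinely different route from the paper. The paper's proof differentiates $\tilde{G}_s$ to obtain the first-order linear initial value problem \eqref{eq:ivp_G} and then asserts that a direct calculation (termwise differentiation of the series, plus uniqueness for the IVP) identifies the series as its unique solution; you instead derive the expansion directly from the integral: the substitutions $w=v^2-1$, $w=t/r^2$ give $\tilde{G}_s(r)=\tfrac12\int_0^{1-r^2}t^{s-1}(r^2+t)^{-n/2}\,dt$ (I checked the bookkeeping; the powers of $r$ do cancel), and then the binomial expansion of $(1-((1-r^2)-t))^{-n/2}$, which is legitimate since $0\le (1-r^2)-t\le 1-r^2<1$ on the integration range, together with Tonelli (all terms nonnegative) and the Beta integral $B(s,k+1)=k!/(s)_{k+1}$ yields exactly \eqref{eq:series_exp_G}; the case $r\ge 1$ is trivial on both sides. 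What each approach buys: the paper's ODE argument is shorter on the page but leaves the verification that the series solves \eqref{eq:ivp_G} to the reader, whereas your computation produces the coefficients rather than verifying them, with the interchange of sum and integral cleanly justified by positivity. Your intermediate representation is, up to rescaling, the same Euler-type integral the paper uses later for $h$ (the formula preceding \eqref{eq:structure_h}), and your closing remark that it is the Euler representation of $\tfrac{(1-r^2)^s}{2s}\,{}_2F_1\bigl(\tfrac n2,1;s+1;1-r^2\bigr)$ makes the hypergeometric structure explicit and connects naturally to the alternative derivation via \cite{DKK} that the paper mentions after the lemma.
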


This claim could also be deduced from \cite[Remark 1]{DKK}.

\begin{proof}[Proof of Lemma~\ref{lem:series_exp_G}]
Differentiating $\tilde{G}_s $ yields for $r\in(0,1]$:
\begin{eqnarray*}
 \tilde{G}_s ' (r) &=&(2s-n) r^{2s-n-1}\int_1^{1/r} (v^2-1)^{s-1} v^{1-n}\, dv
     -r^{2s-n-2} \left(\frac{1}{r^2} - 1 \right)^{s-1} r^{n-1}\\
&=& \frac{(2s-n)}{r} \tilde{G}_s (r) -\frac{1}{r}(1-r^2)^{s-1}.
\end{eqnarray*}
Hence $\tilde{G}_s $ solves the following initial value problem:
\begin{equation}\label{eq:ivp_G}
\begin{cases}
\tilde{G}_s ' (r)-\frac{(2s-n)}{r} \tilde{G}_s (r)= -\frac{1}{r}(1-r^2)^{s-1}, \quad r\in(0,1],\\
\tilde{G}_s (1)=0.
\end{cases}\end{equation}
A direct  calculation shows that the unique solution of \eqref{eq:ivp_G} 
is given by the right-hand side
of \eqref{eq:series_exp_G}, and this finishes the proof of Lemma~\ref{lem:series_exp_G}.
\end{proof}

Now we employ Lemma~\ref{lem:series_exp_G} and Theorem~1 in~\cite{dyda} 
to find an expression 
of~$(-\Delta)^\sigma_y \tilde{G}_s(0,y)$ (extended by~$0$ outside~$B$) 
in terms of an auxiliary function~$G^\sharp_s$ 
(given in~\eqref{eq:def_G_hat}). 

\begin{lemma}\label{lem:series_exp_G_delta_sigma}
For $r\in(0,1)$ we have that 
$$
(-\Delta)^\sigma \tilde{G}_s (r) =\frac{4^{\sigma-\frac{1}{2}} 
\Gamma(\frac{n}{2}+\sigma)\Gamma(s)}{\Gamma(\frac{n}{2}) \cdot m! }\, G^\sharp_s (r),
$$
with
\begin{equation}\label{eq:def_G_hat}
 G^\sharp_s (r)= \sum^\infty_{k=0}\frac{\left(\frac{n}{2}\right)_k }{  (m+1)_k}
                      \, {}_2F_1 \left( \frac{n}{2}+\sigma,-k-m;\frac{n}{2};r^2\right) .
\end{equation}
Here,   ${}_2F_1$ denotes Gau\ss's hypergeometric function.
\end{lemma}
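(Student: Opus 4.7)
The plan is to apply $(-\Delta)^\sigma$ termwise to the series expansion
$\tilde{G}_s(r)=\sum_{k=0}^\infty \frac{(n/2)_k}{2(s)_{k+1}}(1-r^2)_+^{k+s}$
supplied by Lemma~\ref{lem:series_exp_G}, and then to invoke Dyda's explicit formula from Theorem~1 of~\cite{dyda}: for every $p>-1$ and $|y|<1$,
$$
(-\Delta)^\sigma\bigl[(1-|\,\cdot\,|^2)_+^{p}\bigr](y)
=\frac{4^\sigma\,\Gamma\!\bigl(\tfrac{n}{2}+\sigma\bigr)\,\Gamma(p+1)}{\Gamma\!\bigl(\tfrac{n}{2}\bigr)\,\Gamma(p+1-\sigma)}\;{}_2F_1\!\left(\tfrac{n}{2}+\sigma,\,\sigma-p;\,\tfrac{n}{2};\,|y|^2\right).
$$
Specialising to $p=k+s=k+m+\sigma$, the second hypergeometric parameter $\sigma-p=-(k+m)$ is a non-positive integer, so the series terminates to a polynomial in $r^2$; simultaneously $\Gamma(p+1-\sigma)=(k+m)!$. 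These polynomials are precisely those appearing inside $G^\sharp_s$ in~\eqref{eq:def_G_hat}.

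Matching the constants then reduces to a short manipulation of Pochhammer symbols. Using $\Gamma(k+s+1)/(s)_{k+1}=\Gamma(s)$, $(k+m)!=m!\,(m+1)_k$, and $4^\sigma/2=4^{\sigma-1/2}$, the $k$th prefactor
$$
\frac{(n/2)_k}{2\,(s)_{k+1}}\cdot\frac{4^\sigma\,\Gamma(\tfrac{n}{2}+\sigma)\,\Gamma(k+s+1)}{\Gamma(\tfrac{n}{2})\,(k+m)!}
$$
collapses exactly to $\dfrac{4^{\sigma-1/2}\,\Gamma(\frac{n}{2}+\sigma)\,\Gamma(s)}{\Gamma(\frac{n}{2})\,m!}\cdot\dfrac{(n/2)_k}{(m+1)_k}$, which is the $k$th coefficient in~\eqref{eq:def_G_hat}. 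Pulling the $k$-independent factor out of the sum yields the claimed identity.

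The genuine difficulty is justifying the interchange of $(-\Delta)^\sigma$ with the infinite sum. For fixed $y$ with $0<|y|<1$, I would choose $\rho>0$ small enough that the closed ball $\overline{B_\rho(y)}$ lies inside $B\setminus\{0\}$, and split the singular integral in~\eqref{eq:def_fract_Laplacian} into a near-diagonal piece on $B_\rho(y)$ and a far piece on $\R^n\setminus B_\rho(y)$. On the far piece the kernel $|x-y|^{-n-2\sigma}$ is bounded, and since the partial sums $u_N=\sum_{k=0}^N\frac{(n/2)_k}{2(s)_{k+1}}(1-|\cdot|^2)_+^{k+s}$ are non-negative, increase monotonically to $\tilde{G}_s$, vanish on $\R^n\setminus B$, and $\tilde{G}_s$ is integrable on $B$, monotone convergence delivers the exchange. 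On the near-diagonal piece each summand is smooth on $B_\rho(y)$ with derivatives at $y$ bounded by $(1-|y|^2)^{k+s-2}$ up to a polynomial factor in $k$; the exponential decay in $k$ coming from $|y|<1$ controls the resulting principal-value integrals and makes the termwise application summable. This dominated/monotone-convergence bookkeeping is the actual technical content; the remainder, as above, is pure hypergeometric algebra.
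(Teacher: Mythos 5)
Your proposal is correct and follows essentially the same route as the paper: apply Dyda's Theorem~1 term by term to the expansion of Lemma~\ref{lem:series_exp_G} (with $p=k+s$, so that the second hypergeometric parameter is $-k-m$ and $\Gamma(p+1-\sigma)=(k+m)!$) and then collapse the constants via $\Gamma(k+s+1)=(s)_{k+1}\Gamma(s)$, $(k+m)!=m!\,(m+1)_k$ and $4^\sigma/2=4^{\sigma-\frac12}$, exactly as in the paper's computation (where the same constant appears in Beta-function form $B(-\sigma,k+m+\sigma+1)/\Gamma(-\sigma)$). Your additional near/far splitting to justify interchanging $(-\Delta)^\sigma$ with the series is a sound way to make rigorous a step the paper passes over as immediate, and your geometric-in-$k$ bounds (valid since $\rho<|y|$ forces $1-(|y|-\rho)^2<1$) suffice for it.
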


\begin{proof}
We recall Euler's Beta-function $B(p,q):=\frac{\Gamma(p) \Gamma(q) }{\Gamma(p+q) }$.
 When applying Dyda's fractional differentiation result \cite[Theorem 1]{dyda} one should
observe that he uses $\Delta^\sigma=-(-\Delta)^\sigma$. This  yields immediately:
\begin{eqnarray*}
 (-\Delta)^\sigma \tilde{G}_s (r) &=&\frac{4^{\sigma-\frac{1}{2}} 
\Gamma(\frac{n}{2}+\sigma)}{\Gamma(\frac{n}{2}) \, \Gamma(-\sigma)}\,\\
&&\quad \cdot
\sum^\infty_{k=0} \frac{\left(\frac{n}{2}\right)_k B(-\sigma,k+m+\sigma+1)}{ (s)_{k+1}}\, 
{}_2F_1 \left( \frac{n}{2}+\sigma,-k-m;\frac{n}{2};r^2\right)\\
&=&\frac{4^{\sigma-\frac{1}{2}} \Gamma(\frac{n}{2}+\sigma)}{\Gamma(\frac{n}{2}) }
\sum^\infty_{k=0} \frac{\left(\frac{n}{2}\right)_k \Gamma(k+m+\sigma+1)}{ (s)_{k+1} 
\Gamma(k+m+1)} \, {}_2F_1 \left( \frac{n}{2}+\sigma,-k-m;\frac{n}{2};r^2\right)\\
&=&\frac{4^{\sigma-\frac{1}{2}} \Gamma(\frac{n}{2}+\sigma)}{\Gamma(\frac{n}{2}) }
\sum^\infty_{k=0} \frac{\left(\frac{n}{2}\right)_k \Gamma(k+s+1)}{ (s)_{k+1} (m+1)_k
\Gamma(m+1)} \, {}_2F_1 \left( \frac{n}{2}+\sigma,-k-m;\frac{n}{2};r^2\right)\\
&=&\frac{4^{\sigma-\frac{1}{2}} \Gamma(\frac{n}{2}+\sigma)}{\Gamma(\frac{n}{2})\cdot m! }
\sum^\infty_{k=0} \frac{\left(\frac{n}{2}\right)_k (s)_{k+1}\Gamma(s)}{ (s)_{k+1} 
(m+1)_k} \, {}_2F_1 \left( \frac{n}{2}+\sigma,-k-m;\frac{n}{2};r^2\right)\\
&=&\frac{4^{\sigma-\frac{1}{2}} \Gamma(\frac{n}{2}+\sigma)  \Gamma(s)}{\Gamma(
\frac{n}{2})\cdot m! }
\sum^\infty_{k=0} \frac{\left(\frac{n}{2}\right)_k }{  (m+1)_k} \, {}_2F_1 \left( 
\frac{n}{2}+\sigma,-k-m;\frac{n}{2};r^2\right),
\end{eqnarray*}
which gives the desired result. 
\end{proof}

Finally, we provide an expression for the function~$G^\sharp_s$, 
also in terms of the Green function for~$(-\Delta)^m$. 
Indeed, we have the following result:

\begin{lemma}\label{lem:G_delta_sigma_equals_polyharmonic_Green}
There exist real numbers $a_0,\ldots,a_{m-1}$ such that, for $r\in(0,1)$, we have that
\begin{equation}\label{eq:key_relation}
 G^\sharp_s (r)=\sum^{m-1}_{k=0} a_k(1-r^2)^k+
 2m \frac{\Gamma(m+\sigma) \Gamma(\frac{n}{2}) }{\Gamma(\frac{n}{2} +\sigma )
\Gamma(m)} \tilde{G}_m (r) .
\end{equation}
\end{lemma}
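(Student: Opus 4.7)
The plan is to identify the Taylor coefficients of the two sides of \eqref{eq:key_relation} in powers of $t := 1-r^2$ near $r=1$. By Lemma~\ref{lem:series_exp_G} applied with $s$ replaced by $m$, we have $\tilde{G}_m(r) = \sum_{j \ge 0} \frac{(n/2)_j}{2m(m+1)_j}\,t^{j+m}$, so the claim reduces to showing that if $G^\sharp_s(r) = \sum_{l \ge 0} A_l\, t^l$, then for every $l \ge m$
\[
A_l \;=\; \frac{\Gamma(m+\sigma)\,\Gamma(\frac{n}{2})}{\Gamma(\frac{n}{2}+\sigma)\,\Gamma(m)}\cdot \frac{(\frac{n}{2})_{l-m}}{(m+1)_{l-m}},
\]
and that the $A_l$ with $l < m$ converge to finite numbers which then play the role of the $a_l$.

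The first step is to convert each of the polynomials ${}_2F_1(\frac{n}{2}+\sigma, -k-m; \frac{n}{2}; r^2)$ in \eqref{eq:def_G_hat}, which has degree $k+m$ in $r^2$, into a polynomial in $t$. The right tool is the $z \mapsto 1-z$ connection formula for the Gauss hypergeometric function; in the degenerate case where the second parameter $-k-m$ is a non-positive integer, the companion term vanishes (a $\Gamma$-pole appears in its denominator), leaving the clean identity
\[
{}_2F_1\!\left(\tfrac{n}{2}+\sigma, -k-m; \tfrac{n}{2}; r^2\right) \;=\; \frac{(-\sigma)_{k+m}}{(\frac{n}{2})_{k+m}}\;{}_2F_1\!\left(\tfrac{n}{2}+\sigma, -k-m; 1+\sigma-k-m; t\right).
\]
Substituting this into \eqref{eq:def_G_hat}, expanding the inner hypergeometric as a polynomial in $t$, and swapping the two summations yields $G^\sharp_s(r) = \sum_{l \ge 0} A_l\, t^l$ with an explicit formula for $A_l$ as a sum over $k \ge \max(0, l-m)$ of Pochhammer products. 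The swap is justified by the estimate that the magnitude of the $(k,l)$ summand behaves like $k^{-m-1-\sigma}$ for large $k$, which is summable uniformly in $l$.

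The decisive step is the evaluation of $A_l$ for $l \ge m$. I would reindex $p = k - (l-m) \ge 0$ and simplify systematically using the three identities
\[
(-p-l)_l \;=\; (-1)^l\,\frac{(p+l)!}{p!},\qquad (1+\sigma-p-l)_l \;=\; (-1)^l\,(p-\sigma)_l,\qquad (-\sigma)_{p+l} \;=\; (-\sigma)_p\,(p-\sigma)_l.
\]
The two $(-1)^l$ factors cancel in pairs, the two $(p-\sigma)_l$ factors cancel, and after pulling out all $p$-independent constants the remaining inner sum collapses to
\[
A_l \;=\; \frac{(\frac{n}{2}+\sigma)_l\, m!\, \Gamma(\frac{n}{2}+l-m)}{l!\, \Gamma(\frac{n}{2}+l)} \cdot {}_2F_1\!\left(\tfrac{n}{2}+l-m, -\sigma; \tfrac{n}{2}+l; 1\right).
\]
Since $c - a - b = m + \sigma > 0$, Gauss's summation theorem evaluates the ${}_2F_1$ at $1$ to $\Gamma(\frac{n}{2}+l)\,\Gamma(m+\sigma)/(\Gamma(m)\,\Gamma(\frac{n}{2}+l+\sigma))$, and the prefactors then collapse precisely to the coefficient predicted above. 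For $l < m$ the same manipulations produce an analogous, non-terminating Pochhammer sum whose convergence follows from the asymptotics $(-\sigma)_r/r! \sim r^{-1-\sigma}$ combined with $1/(\frac{n}{2}+q)_m \sim q^{-m}$; we simply define $a_l$ as its value.

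The main obstacle I anticipate is purely algebraic bookkeeping: carefully tracking the three $(-1)^l$ signs and the three Pochhammer cancellations so that the residual inner sum is genuinely a Gauss-type ${}_2F_1(a,b;c;1)$ meeting the convergence condition $\Re(c-a-b) > 0$. Once those cancellations align, the appearance of the specific constant $2m\,\Gamma(m+\sigma)\Gamma(\frac{n}{2})/(\Gamma(\frac{n}{2}+\sigma)\Gamma(m))$ stated in~\eqref{eq:key_relation} is automatic.
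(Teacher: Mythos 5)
Your proposal is correct and follows essentially the same route as the paper's proof: the terminating $z\mapsto 1-z$ connection formula of \cite[Corollary 2.3.3]{AAR}, expansion in powers of $1-r^2$ with an interchange of the two summations, reduction of each coefficient $A_l$ (for $l\ge m$) to a Gauss ${}_2F_1$ evaluated at $1$ via Gauss's summation theorem (your Pochhammer cancellations are exactly the paper's identity \eqref{eq:star2}), and matching with the series for $\tilde G_m$ from Lemma~\ref{lem:series_exp_G}; your intermediate formula for $A_l$ and the final constant check out. The only (minor, fixable) imprecision is in the justification of the swap: your bound $k^{-m-1-\sigma}$ is not uniform in $l$ up to the diagonal $\ell=k+m$, where the summand can grow like a power of $k$ and is controlled only by the factor $(1-r^2)^\ell$, which is why the paper instead argues via positivity of all summands except those with $\ell=k+m$.
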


\begin{proof} 
If we directly insert the definition of  $\, {}_2F_1 
\left( \frac{n}{2}+\sigma,-k-m;\frac{n}{2};r^2\right)$ 
into \eqref{eq:def_G_hat}
we would come up with polynomials with terms of alternating sign. 
The whole series would then not be
absolutely convergent and one could not rearrange the order of summation. 
In order to bypass this
difficulty we use \cite[Corollary 2.3.3]{AAR}. Here it is quite important 
that~$\sigma$ is not an integer. 
We first notice that
$$
\, {}_2F_1 \left( \frac{n}{2}+\sigma,-k-m;\frac{n}{2};r^2\right)=
\frac{(-\sigma)_{k+m}}{( \frac{n}{2})_{k+m}}
\, {}_2F_1 \left(\frac{n}{2}+\sigma, -k-m; \sigma+1-k-m;1-r^2 \right),
$$
where
\begin{eqnarray*}
 \lefteqn{
\, {}_2F_1 \left(\frac{n}{2}+\sigma, -k-m; \sigma+1-k-m;1-r^2 \right)
=\sum^{k+m}_{\ell=0}\frac{\left(\frac{n}{2}+\sigma \right)_\ell 
\left( -k-m\right)_\ell}{\ell !\, \left(\sigma+1-k-m \right)_\ell} (1-r^2 )^\ell
}
\\
&=&\sum^{k+m}_{\ell=0}\frac{\left(\frac{n}{2}+\sigma \right)_\ell 
\left( m+k\right)\cdot \ldots \cdot (m+k-\ell+1)}{
\ell !\, (m+k-\sigma -1 )\cdot \ldots \cdot (m+k-\sigma -\ell)} (1-r^2 )^\ell \\
&=&\sum^{k+m}_{\ell=0}\frac{\left(\frac{n}{2}+\sigma \right)_\ell \, (m+k-\ell+1)_\ell }{
\ell ! \, (m+k-\sigma -\ell)_\ell } (1-r^2 )^\ell. \hspace*{4cm}
\end{eqnarray*}
Here one has that all summands but for $\ell=k+m$ are positive. 
This means that in what follows we
have absolute convergence and may rearrange the order of summation. 
In view of Lemma~\ref{lem:series_exp_G_delta_sigma}
we obtain that, for suitable real numbers $a_\ell$,
\begin{eqnarray*}
 G^\sharp_s (r) & = & \sum^\infty_{k=0}\frac{ (\frac{n}{2} )_k \,(-\sigma)_{k+m} }{  
(m+1)_k \,( \frac{n}{2})_{k+m} }
\sum^{k+m}_{\ell=0}\frac{ (\frac{n}{2}+\sigma   )_\ell \, (m+k-\ell+1)_\ell }{
\ell ! \, (m+k-\sigma -\ell)_\ell } (1-r^2 )^\ell\\
&=&\sum^\infty_{k=0}\frac{ (-\sigma)_{k+m} }{  (m+1)_k \,( \frac{n}{2}+k)_{m} }
\sum^{k+m}_{\ell=0}\frac{(\frac{n}{2}+\sigma )_\ell \, (m+k-\ell+1)_\ell }{
\ell ! \, (m+k-\sigma -\ell)_\ell } (1-r^2 )^\ell\\
&=&\sum^\infty_{k=m}\frac{ (-\sigma)_{k} }{  (m+1)_{ k -m} \,( \frac{n}{2}+k-m)_{m} }
\sum^{k}_{\ell=0}\frac{ (\frac{n}{2}+\sigma  )_\ell \, (k+1-\ell )_\ell }{
\ell ! \, (k-\sigma -\ell)_\ell } (1-r^2 )^\ell\\
&=& \sum^{m-1}_{\ell=0} a_\ell\, (1-r^2)^\ell\\
&&+\sum^\infty_{\ell=m} \frac{(\frac{n}{2}+\sigma )_\ell}{\ell ! }(1-r^2 )^\ell
\, \sum^\infty_{k=\ell}\frac{(-\sigma)_{k} \, (k+1-\ell )_\ell }{ (m+1)_{ k -m} 
\,( \frac{n}{2}+k-m)_{m}\,  (k-\sigma -\ell)_\ell }.
\end{eqnarray*}
Interchanging $k$ and $\ell$ leads us to 
\begin{equation}\begin{split}\label{eq:star1}
 G^\sharp_s (r) =\,& \sum^{m-1}_{k=0} a_k\, (1-r^2)^k \\
&+\sum^\infty_{k=m} \frac{(\frac{n}{2}+\sigma )_k}{k! }(1-r^2 )^k
\, \sum^\infty_{\ell=k}\frac{(-\sigma)_{\ell} \, (\ell+1- k)_k }{ (m+1)_{ \ell -m} 
\,( \frac{n}{2}+\ell-m)_{m}\,  (\ell-\sigma -k)_k } \\
=\,& \sum^{m-1}_{k=0} a_k\, (1-r^2)^k \\
&+\sum^\infty_{k=m} \frac{(\frac{n}{2}+\sigma )_k}{k! }(1-r^2 )^k
\, \sum^\infty_{\ell=0}\frac{(-\sigma)_{\ell+k} \, (\ell+1)_{ k} }{ (m+1)_{ \ell+k -m} 
\,( \frac{n}{2}+\ell +k -m)_{m}\,  (\ell-\sigma )_{ k} } \\
=\,& \sum^{m-1}_{k=0} a_k\, (1-r^2)^k \\
&+\sum^\infty_{k=m} \frac{(\frac{n}{2}+\sigma )_k}{k! \, (m+1)_{ k -m}}(1-r^2 )^k
\, \sum^\infty_{\ell=0}\frac{(-\sigma)_{\ell} \, (\ell+1)_{ k}\, \cdot \, \ell!  }{ 
(k+1)_{ \ell } \,( \frac{n}{2}+\ell +k -m)_{m} \, \cdot \, \ell! } \\
=\,& \sum^{m-1}_{k=0} a_k\, (1-r^2)^k
+\sum^\infty_{k=m} \frac{(\frac{n}{2}+\sigma )_k}{  (m+1)_{ k -m}}(1-r^2 )^k
\, \sum^\infty_{\ell=0}\frac{(-\sigma)_{\ell}  }{ \ell!   ( \frac{n}{2}+\ell +k -m)_{m}  }.
\end{split}\end{equation}
We show now that for $k\ge m$
\begin{equation}\label{eq:star2}
\frac{(\frac{n}{2}+k-m)_\ell}{(\frac{n}{2}+k)_\ell}
=\frac{(\frac{n}{2}+k-m)_m}{(\frac{n}{2}+k+\ell-m)_m}.
\end{equation}                                                     
Indeed, the claim is obvious for $\ell=m$, while for $\ell >m$ terms cancel as follows:
$$
\frac{(\frac{n}{2}+k-m)_\ell}{(\frac{n}{2}+k)_\ell}
=\frac{(\frac{n}{2}+k-m)\cdot\ldots \cdot(\frac{n}{2}+k) \cdot\ldots 
\cdot (\frac{n}{2}+k+\ell-m-1)}{(\frac{n}{2}+k) \cdot\ldots \cdot (\frac{n}{2}+k+\ell -1) }
=\frac{(\frac{n}{2}+k-m)_m}{(\frac{n}{2}+k+\ell-m)_m}.
$$
On the other hand, if $\ell<m$ we see that
$$
\frac{(\frac{n}{2}+k-m)_m}{(\frac{n}{2}+k+\ell-m)_m}
=\frac{(\frac{n}{2}+k-m)\cdot \ldots\cdot (\frac{n}{2}+k-1)}{
(\frac{n}{2}+k+\ell-m)\cdot \ldots\cdot ( \frac{n}{2}+k) 
\cdot \ldots\cdot(\frac{n}{2}+k+\ell -1) }.
=\frac{(\frac{n}{2}+k-m)_\ell}{(\frac{n}{2}+k)_\ell}.
$$
This completes the proof of~\eqref{eq:star2}.

Combining \eqref{eq:star1} and \eqref{eq:star2}
we conclude further that
\begin{eqnarray*}
 G^\sharp_s (r) & = &
\sum^{m-1}_{k=0} a_k\, (1-r^2)^k\\
&&+\sum^\infty_{k=m} \frac{(\frac{n}{2}+\sigma )_k}{  (m+1)_{ k -m} 
\,(\frac{n}{2}+k-m)_m }(1-r^2 )^k
\, \sum^\infty_{\ell=0}\frac{(-\sigma)_{\ell}  \, (\frac{n}{2}+k-m)_\ell}{ \ell!   
\,(\frac{n}{2}+k)_\ell }\\
&=&\sum^{m-1}_{k=0} a_k\, (1-r^2)^k\\
&&+\sum^\infty_{k=m} \frac{(\frac{n}{2}+\sigma )_k}{  (m+1)_{ k -m} \,
(\frac{n}{2}+k-m)_m }(1-r^2 )^k
 \, {}_2F_1 (-\sigma, \frac{n}{2}+k-m;\frac{n}{2}+k;1).
\end{eqnarray*}
We apply now Gau\ss's summation theorem, 
see \cite[Theorem 2.2.2]{AAR}, and obtain further:
\begin{eqnarray*}
  G^\sharp_s (r) & = &
\sum^{m-1}_{k=0} a_k\, (1-r^2)^k
+\sum^\infty_{k=m} \frac{(\frac{n}{2}+\sigma )_k \Gamma(\frac{n}{2}+k)\, 
\Gamma(m+\sigma)
}{  (m+1)_{ k -m} \,(\frac{n}{2}+k-m)_m \Gamma(\frac{n}{2}+k+\sigma)\, 
\Gamma(m)}(1-r^2 )^k\\
&=&\sum^{m-1}_{k=0} a_k\, (1-r^2)^k
+\frac{\Gamma(m+\sigma)\Gamma(\frac{n}{2})}{\Gamma(\frac{n}{2}+\sigma)\Gamma(m)}
\sum^\infty_{k=m} \frac{ (\frac{n}{2})_k\, 
}{  (m+1)_{ k -m} \,(\frac{n}{2}+k-m)_m }(1-r^2 )^k\\
&=&\sum^{m-1}_{k=0} a_k\, (1-r^2)^k
+\frac{\Gamma(m+\sigma)\Gamma(\frac{n}{2})}{\Gamma(\frac{n}{2}+\sigma)\Gamma(m)}
\sum^\infty_{k=m} \frac{ (\frac{n}{2})_{k-m}\, 
}{  (m+1)_{ k -m}  }(1-r^2 )^k\\
&=&\sum^{m-1}_{k=0} a_k\, (1-r^2)^k
+2m\frac{\Gamma(m+\sigma)\Gamma(\frac{n}{2})}{
\Gamma(\frac{n}{2}+\sigma)\Gamma(m)}
\sum^\infty_{k=0} \frac{ (\frac{n}{2})_{k}\, 
}{ 2 (m)_{ k+1 }  }(1-r^2 )^{k+m}.
\end{eqnarray*}
Thus, we recall Lemma~\ref{lem:series_exp_G} and end up with
$$
 G^\sharp_s (r) =\sum^{m-1}_{k=0} a_k\, (1-r^2)^k
+2m\frac{\Gamma(m+\sigma)\Gamma(\frac{n}{2})}{
\Gamma(\frac{n}{2}+\sigma)\Gamma(m)}
\tilde{G}_m (r).
$$
This concludes the proof of Lemma~\ref{lem:G_delta_sigma_equals_polyharmonic_Green}.
\end{proof}

With all this in hand, we are able to complete the proof of Proposition~\ref{prop:pole}. 

\begin{proof}[Proof of Proposition~\ref{prop:pole}] 
Recalling the definition of~$(-\Delta)^s$ in~\eqref{operator} and 
combining Lemmata~\ref{lem:series_exp_G_delta_sigma} 
and~\ref{lem:G_delta_sigma_equals_polyharmonic_Green} we obtain 
by writing $r=|y|$ that 
\begin{eqnarray*}
(-\Delta)^s \tilde{G}_s(r) &=& 
(-\Delta)^m\, (-\Delta)^\sigma \tilde{G}_s (r) \\&=& 
\frac{4^{\sigma-\frac{1}{2}} 
\Gamma(\frac{n}{2}+\sigma)\Gamma(s)}{\Gamma(\frac{n}{2}) \cdot m! }\, 
(-\Delta)^m G^\sharp_s (r)\\
&=& \frac{4^{\sigma-\frac{1}{2}} 
\Gamma(\frac{n}{2}+\sigma)\Gamma(s)}{\Gamma(\frac{n}{2}) \cdot m! }\cdot 
2m \, \frac{\Gamma(m+\sigma) \Gamma(\frac{n}{2}) }{
\Gamma(\frac{n}{2} +\sigma )\Gamma(m)} 
(-\Delta)^m \tilde{G}_m (r)\\
&=& \frac{4^{\sigma-\frac{1}{2}} 
\Gamma(\frac{n}{2}+\sigma)\Gamma(s)}{\Gamma(\frac{n}{2}) \cdot m! }\cdot 
2m\, \frac{\Gamma(m+\sigma) \Gamma(\frac{n}{2}) }{
\Gamma(\frac{n}{2} +\sigma )\Gamma(m)} \cdot 
\Big(n e_n 4^{m-1}\Gamma(m)^2\Big)\, \delta_0(y). 
\end{eqnarray*}
Now we recall that~$s=m+\sigma$ and that~$\Gamma(m)=(m-1)!$, therefore
$$ 
(-\Delta)^s \tilde{G}_s(|y|) = n e_n 4^{s-1} \Gamma(s)^2 \delta_0(y),
$$
which gives the desired result, thanks to~\eqref{eq:Boggioformulaconstant}.
\end{proof}

\section{Proof of Theorem~\ref{TH}}\label{sec:proof}

In this section we complete the proof of Theorem~\ref{TH}, with the aid 
of the results on the Green function obtained in 
Sections~\ref{sec:mob} and~\ref{sec:green}. 
To this aim,
in the following result, we show,
by straightening the boundary of the unit ball,
that $x\mapsto (1-|x|^2)_+^\sigma\in H^\sigma(\R^n)$
(from this, in Corollary~\ref{TRE} we will obtain that
$x\mapsto (1-|x|^2)_+^s\in H^s(\R^n)$).

\begin{lemma}\label{Hs1}
For any~$\sigma\in(0,1)$, the 
function~$\R^n\ni x\mapsto (1-|x|^2)_+^{\sigma}$
belongs to~$H^{\sigma}(\R^n)$.
\end{lemma}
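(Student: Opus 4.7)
The plan is to show
$$[u]_{H^\sigma}^2 := \int_{\R^n}\int_{\R^n} \frac{|u(x)-u(y)|^2}{|x-y|^{n+2\sigma}}\,dx\,dy<\infty$$
for $u(x):=(1-|x|^2)_+^\sigma$; membership in $L^2(\R^n)$ is immediate because $u$ is bounded and compactly supported. The overall strategy is to localise near $\partial B$, flatten the boundary, and reduce the problem to a one-dimensional Gagliardo estimate.

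First I would dispose of the far-diagonal region $\{|x-y|\ge 1\}$: since $|u(x)-u(y)|^2\le 4\|u\|_\infty^2$ and at least one of $x,y$ must lie in $\overline{B}$, the contribution is bounded by $C\int_{\overline{B}}\int_{|z|\ge 1}|z|^{-n-2\sigma}\,dz\,dx<\infty$. For the near-diagonal region $\{|x-y|<1\}$ I would use a smooth partition of unity subordinate to a cover of $\R^n$ by $B_{1-\delta}$, $\R^n\setminus\overline{B}_{1+\delta}$, and finitely many balls $U_1,\dots,U_N$ centred at points of $\partial B$. On $B_{1-\delta}$ the function $u$ is $C^\infty$ with bounded derivatives and on $\R^n\setminus\overline{B}_{1+\delta}$ it vanishes identically, so both contribute a finite amount to the seminorm.

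On each boundary chart I would use a smooth diffeomorphism $\Phi\colon U\to V$ sending $\partial B\cap U$ to $\{t=0\}\cap V$ and $B\cap U$ to $\{t>0\}\cap V$, writing points of $V$ as $(x',t)\in\R^{n-1}\times\R$. Because $\nabla(1-|x|^2)\ne 0$ on $\partial B$, the pullback $1-|\Phi(x',t)|^2$ factorises as $t\,\psi(x',t)$ with $\psi$ smooth and bounded away from zero on $V$, so $u\circ\Phi(x',t)=(t_+)^\sigma\,\psi(x',t)^\sigma$. The $H^\sigma$-seminorm is equivalent under smooth diffeomorphisms with bounded Jacobian (a standard consequence for $\sigma\in(0,1)$ of the bi-Lipschitz change of variables in the Gagliardo integral), so after multiplying by a further smooth cutoff the task boils down to proving that $\phi(x',t):=(t_+)^\sigma\,\chi(x',t)$ belongs to $H^\sigma(\R^n)$ for every $\chi\in C^\infty_c(\R^n)$.

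For this last step I would split $\phi(x)-\phi(y)=\bigl((x_n)_+^\sigma-(y_n)_+^\sigma\bigr)\chi(x)+(y_n)_+^\sigma\bigl(\chi(x)-\chi(y)\bigr)$. The second term is handled using $|\chi(x)-\chi(y)|\le C|x-y|$, compact support and the integrability of $|z|^{2-n-2\sigma}$ near $0$, which holds precisely because $\sigma<1$. For the first term one first integrates the tangential variables via
$$\int_{\R^{n-1}}\frac{dz}{(|z|^2+\tau^2)^{(n+2\sigma)/2}}\;\le\; C\,|\tau|^{-1-2\sigma},$$
reducing matters to the one-variable estimate
$$\int_0^c\int_0^c \frac{|t^\sigma-s^\sigma|^2}{|t-s|^{1+2\sigma}}\,dt\,ds<\infty,$$
which follows by the symmetric substitution $s=t(1-v)$; the inner integral is then a constant multiple of $\int_0^1(1-(1-v)^\sigma)^2 v^{-1-2\sigma}\,dv$, whose integrand is $\asymp v^{1-2\sigma}$ near $0$ and bounded near $1$, hence finite.

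The main obstacle is that no global pointwise control suffices: the naive Hölder estimate $|u(x)-u(y)|\le C|x-y|^\sigma$ delivers only the non-integrable kernel $|x-y|^{-n}$ inside the Gagliardo integral. One is genuinely forced to exploit that the $\sigma$-Hölder singular behaviour of $u$ is confined to the codimension-one interface $\partial B$, while $u$ is $C^\infty$ off this interface; the localise-and-flatten strategy combined with the tangential-integration identity is exactly what converts this geometric fact into the tractable one-dimensional computation above.
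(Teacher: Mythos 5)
Your proof is correct and follows essentially the same route as the paper: localise at the boundary, flatten it so that $(1-|x|^2)_+^{\sigma}$ becomes $(t_+)^{\sigma}$ times a smooth strictly positive factor, integrate out the tangential variables, and conclude with the one-dimensional Gagliardo computation. The only point to tidy up is that the final splitting of $\phi(x)-\phi(y)$ into $\bigl((x_n)_+^{\sigma}-(y_n)_+^{\sigma}\bigr)\chi(x)$ and $(y_n)_+^{\sigma}\bigl(\chi(x)-\chi(y)\bigr)$ should be carried out only on the near-diagonal region $\{|x-y|<1\}$ (disposing of the far region first by boundedness and compact support, exactly as you did for $u$ at the outset), since each summand separately decays only like $|x-y|^{-n}$ for $y$ far away and is therefore not integrable at infinity, even though their sum is.
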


\begin{proof} 
With a covering argument, we may focus on proving that~$u(x)
:=(1-|x|^2)_+^\sigma$ belongs to~$H^\sigma(B_{1/10}(-e_n))$.
We use the notation~$x=(x',x_n)\in\R^{n-1}\times\R$ and,
for any~$|x|\le1$, we define
$$ y=(y',y_n)=\Phi(x):=\big( x',\,x_n+\sqrt{1-|x'|^2}\big).$$
Notice that this map sends the unit ball~$B$ into~$\R^{n-1}\times(0,+\infty)$
and we can consider the inverse map
$$ x=\Psi(y):=\big( y',\,y_n-\sqrt{1-|y'|^2}\big).$$
Notice that~$\Phi$ is smooth in~$B_{1/10}(-e_n)$, hence $\Psi$ is also
smooth on the image of~$B_{1/10}(-e_n)$, which is in turn contained into~$
D:=B^{n-1}_{1/10}\times\left[-\frac{11}{10},\,\frac{11}{10}\right]$, where the index~$n-1$ 
indicates that this  is an $(n-1)$-dimensional  ball. Hence, we define
$$ 
v(y):= u(\Psi(y))= (  2y_n \sqrt{1-|y'|^2} -y_n^2 )_+^\sigma 
$$
and we aim at showing that~$v\in H^\sigma(D)$.
Notice also that, if~$y\in D$,
$$ 
\sqrt{1-|y'|^2} \in \left[ \sqrt{\frac{99}{100}}, \;1\right],
$$
and so
$$ 
2 \sqrt{1-|y'|^2} - y_n \ge 2\sqrt{\frac{99}{100}}-\frac{11}{10}>0.
$$
Consequently, $v(y)=0$ for~$y\in D$ if and only if~$y_n<0$, and so we can write
$$ v(y)= (  2y_n \sqrt{1-|y'|^2} - y_n^2 )_+^\sigma=
(y_n)_+^\sigma \,(  2\sqrt{1-|y'|^2} - y_n )^\sigma=(y_n)_+^\sigma \,w(y),$$
with~$w$ smooth in~$D$.

As a consequence, we only need to show that~$\zeta(y):=(y_n)_+^\sigma$
belongs to~$H^\sigma(D)$. This follows from the  computation below:
\begin{eqnarray*}
&& \int_{B^{n-1}_{1/10}} \,dx'\,
\int_{-11/10}^{11/10} \,dx_n\,
\int_{B^{n-1}_{1/10}} \,dy'\,
\int_{-11/10}^{11/10} \,dy_n\,
\frac{|\zeta(x)-\zeta(y)|^2}{|x-y|^{n+2\sigma}} \\
&=& 2\int_{B^{n-1}_{1/10}} \,dx'\,
\int_{-11/10}^{11/10} \,dx_n\,
\int_{B^{n-1}_{1/10}} \,dy'\,
\int_{-11/10}^{x_n} \,dy_n\,
\frac{|(x_n)_+^\sigma-(y_n)_+^\sigma|^2}{|x-y|^{n+2\sigma}} 
\\ &=&
2\int_{B^{n-1}_{1/10}} \,dx'\,
\int_{-11/10}^{11/10} \,dx_n\,
\int_{B^{n-1}_{1/10}} \,dy'\,
\int_{-11/10}^{x_n} \,dy_n\,
\frac{|(x_n)_+^\sigma-(y_n)_+^\sigma|^2}{
|x_n-y_n|^{n+2\sigma} \left(1+\frac{|x'-y'|^2}{|x_n-y_n|^2}
\right)^{\frac{n+2\sigma}{2}}}\\
&\le& 
2\int_{B^{n-1}_{1/10}} \,dx'\,
\int_{-11/10}^{11/10} \,dx_n\,
\int_{\R^{n-1}} \,d\mu'\,
\int_{-11/10}^{x_n} \,dy_n\,
\frac{|(x_n)_+^\sigma-(y_n)_+^\sigma|^2}{
|x_n-y_n|^{1+2\sigma} \left(1+|\mu'|^2\right)^{\frac{n+2\sigma}{2}}}
\\ &\le&
C\,
\int_{-11/10}^{11/10} \,dx_n\,
\int_{-11/10}^{x_n} \,dy_n\,
\frac{|(x_n)_+^\sigma-(y_n)_+^\sigma|^2}{
|x_n-y_n|^{1+2\sigma}}
\\ &=&
C\,
\int_{0}^{11/10} \,dx_n\,
\int_{-11/10}^{0} \,dy_n\,
\frac{x_n^{2\sigma}}{
|x_n-y_n|^{1+2\sigma}} 
+
C\,
\int_{0}^{11/10} \,dx_n\,
\int_{0}^{x_n} \,dy_n\,
\frac{|x_n^\sigma-y_n^\sigma|^2}{
|x_n-y_n|^{1+2\sigma}} 
\\ &=&
C
+
C\,
\int_{0}^{11/10} \,dx_n\,
\int_{0}^{1} \,d\tau\,x_n
\frac{x_n^{2\sigma}(1-\tau^\sigma)^2}{
x_n^{1+2\sigma}(1-\tau)^{1+2\sigma}} 
\\ &=& C,
\end{eqnarray*}
for some~$C>0$, possibly varying from line to line.
\end{proof}

As a consequence of Lemma~\ref{Hs1}, we obtain

\begin{corollary}\label{TRE}
For any~$m\in\N$ and~$\sigma\in(0,1)$, the 
function~$\R^n\ni x\mapsto (1-|x|^2)_+^{m+\sigma}$
belongs to~$H^{m+\sigma}(\R^n)$.
\end{corollary}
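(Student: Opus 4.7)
The plan is to set $u(x):=(1-|x|^2)_+^{m+\sigma}$ and use the standard characterisation that $u\in H^{m+\sigma}(\R^n)$ if and only if $u\in H^m(\R^n)$ and $\partial^\alpha u\in H^\sigma(\R^n)$ for every multi-index $\alpha$ with $|\alpha|=m$. Inside the unit ball, an elementary induction via iterated chain and product rule yields, for every $\alpha$ with $|\alpha|=k\le m$, polynomials $P_{0}^\alpha,\dots,P_{k}^\alpha$ with
$$
\partial^\alpha\bigl[(1-|x|^2)^{m+\sigma}\bigr]=\sum_{j=0}^{k}P_j^\alpha(x)\,(1-|x|^2)^{m+\sigma-j}\qquad\text{on }B.
$$
Since $m+\sigma-j\ge \sigma>0$ for all $j\le m$, each such expression extends continuously to $\R^n$ by zero outside $B$. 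A routine test-function computation (integration by parts on shrinking annuli around $\partial B$, using the continuous vanishing at the boundary) identifies this zero-extension with the distributional derivative $\partial^\alpha u$.

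For $|\alpha|<m$ every exponent $m+\sigma-j$ satisfies $m+\sigma-j\ge 1+\sigma>1$, so $\partial^\alpha u$ is Lipschitz with compact support, hence in $L^2(\R^n)$; in particular $u\in H^m(\R^n)$. When $|\alpha|=m$, the terms with $j<m$ are likewise $C^1$ with compact support and therefore belong to $H^1(\R^n)\subset H^\sigma(\R^n)$. It therefore remains only to treat the single ``principal'' term $P_m^\alpha(x)\,(1-|x|^2)_+^{\sigma}$, which carries the non-smooth boundary behaviour.

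For this last term I would apply Lemma~\ref{Hs1}, which gives $(1-|x|^2)_+^{\sigma}\in H^\sigma(\R^n)$, and then use the fact that multiplication by a smooth, compactly supported function is bounded on $H^\sigma(\R^n)$. Concretely, choose $\chi\in C^\infty_0(\R^n)$ with $\chi\equiv 1$ on $\overline{B}$; then $P_m^\alpha(x)\,(1-|x|^2)_+^{\sigma}=(\chi P_m^\alpha)(x)\,(1-|x|^2)_+^{\sigma}$ with $\chi P_m^\alpha\in C^\infty_0(\R^n)$. The multiplication property itself is the only delicate step, and the main (essentially only) obstacle in the proof; it is settled by splitting
$$
\phi(x)f(x)-\phi(y)f(y)=\phi(x)\bigl(f(x)-f(y)\bigr)+f(y)\bigl(\phi(x)-\phi(y)\bigr)
$$
inside the Gagliardo seminorm. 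The first piece is controlled by $\|\phi\|_\infty^2[f]_{H^\sigma}^2$. For the second piece one separates $|x-y|\le 1$ (where $|\phi(x)-\phi(y)|\le\|\nabla\phi\|_\infty|x-y|$ gives an integrable singularity because $2\sigma<2$) from $|x-y|>1$ (where $|\phi(x)-\phi(y)|\le 2\|\phi\|_\infty$ and the tail $\int_{|z|>1}|z|^{-n-2\sigma}\,dz$ is finite); in both regimes the remaining integration in $y$ produces $\|f\|_{L^2}^2$. This yields $\partial^\alpha u\in H^\sigma(\R^n)$ for every $|\alpha|=m$, so that $u\in H^{m+\sigma}(\R^n)$, as claimed.
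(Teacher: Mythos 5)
Your proof is correct and follows essentially the same route as the paper: reduce to showing that the $m$-th order derivatives, whose worst term is a smooth factor times $(1-|x|^2)_+^{\sigma}$, lie in $H^\sigma(\R^n)$ via Lemma~\ref{Hs1}, and then pass from this to $u\in H^{m+\sigma}(\R^n)$ by the standard characterisation (the paper does this last step explicitly on the Fourier side with $\alpha=me_j$). Your explicit product-rule decomposition and the smooth-multiplier estimate on $H^\sigma$ simply make precise what the paper compresses into ``iterating Lemma~\ref{Hs1}''.
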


\begin{proof} Let~$u(x):=(1-|x|^2)_+^{m+\sigma}$
and~$\alpha\in\N^n$ with~$\alpha_1+\dots+\alpha_n=m$.
By iterating Lemma~\ref{Hs1}, we obtain that~$D^\alpha u\in H^\sigma(\R^n)$.
That is, using the equivalent norm in Fourier space,
$$ +\infty>\int_{\R^n} |\xi|^{2\sigma} \,\big|{\mathcal{F}}(D^\alpha u)\,(\xi)\big|^2\,d\xi
= (2\pi)^m \,\int_{\R^n} |\xi|^{2\sigma} |\xi_1|^{2\alpha_1}\,\dots\,|\xi_n|^{2\alpha_n}\,
|\hat u(\xi)|^2\,d\xi.$$
Choosing~$\alpha=me_j$, with~$j\in\{1,\dots,N\}$, we find that
$$ +\infty>\sum_{j=1}^n
\int_{\R^n} |\xi|^{2\sigma} |\xi_j|^{2m}\,
|\hat u(\xi)|^2\,d\xi \ge \frac{1}{n^m}
\int_{\R^n} |\xi|^{2\sigma} |\xi|^{2m}\,
|\hat u(\xi)|^2\,d\xi,$$
that is the desired result.
\end{proof}

With this, we are now in the position of completing the proof of
Theorem~\ref{TH}.

\begin{proof} [Proof of Theorem~\ref{TH}] 
We observe that
\begin{equation} \label{eq:LAK:LA}
(-\Delta)^s_y G_s(x,y) = \delta_x(y), \quad (-\Delta)^s_x G_s(x,y) = \delta_y(x),\quad {\mbox{ for any }} x,y\in B,
\end{equation}
in the distributional sense.
To prove this, one can focus on the proof of the first claim, since the second is
equivalent to that, due to the symmetry of~$G_s$ under the exchange of~$x$
and~$y$. Also, the case~$x=0$ has already been considered in Proposition~\ref{prop:pole},
so we can suppose that~$x\not=0$. Hence, we can consider the M\"obius transformation
$$
\phi_x:\overline{B} \to \overline{B}, \quad \phi_x(y):= \frac{1}{|x|^2}
\left( x + (1-|x|^2) \frac{y-\frac{x}{|x|^2}}{\left| y - \frac{x}{|x|^2}\right|^2}\right),
$$ 
which is an automorphism of $\overline{B}$ and 
satisfies $\phi_x(0)=x$, $\phi_x(x)=0$ and~$\phi_x \circ \phi_x=id_{\overline{B}}$. 
Let also~$\eta\in C^\infty_0(B)$ and
$$\tilde\eta(y):=
J_{\phi_x}^{\frac{1}{2}-\frac{s}{n}}(y)\; \eta( \phi_x(y)),$$
where
$$
J_{\phi_x}(y)=(1-|x|^2)^{n}\left||x|y-\frac{x}{|x|} \right|^{-2n}
$$
is the modulus of the Jacobian determinant of~$\phi_x$. Hence,
$$
\tilde\eta(y) = (1-|x|^2)^{(n-2s)/2}\left||x|y-\frac{x}{|x|} \right|^{2s-n} \eta( \phi_x(y)).
$$
We point out that the center of inversion~$x/|x|^2$ is outside $\overline{B}$,
hence~$\phi_x\in C^\infty(\overline{B})$, and therefore~$
\tilde\eta\in C^\infty_0(B)$. Proposition~\ref{prop:pole} shows that 
$$
\tilde\eta(0) =\int_B G_s(0,y) (-\Delta)^s \tilde\eta(y)\, dy.
$$
Therefore, using Lemma~\ref{le:mob} and the fact that $J_{\phi_x}(\phi_x(y))=
\frac{1}{J_{\phi_x}(y)}$ we find that
\begin{eqnarray*}
(1-|x|^2)^{(n-2s)/2} \eta(x) &=& \tilde\eta(0)\\
&=& \int_B G_s(0,y)\, J_{\phi_x}^{\frac{n+2s}{2n}}(y) 
\left((-\Delta)^s \eta \right)\, \left(  \phi_x (y)\right) \, dy\\
&=& \int_B G_s(0,\phi_x(y))\, J_{\phi_x}^{\frac{n+2s}{2n}}(\phi_x(y))\, 
J_{\phi_x}(y) \, \left((-\Delta)^s \eta \right)(y)\, dy\\
&=& \int_B G_s(0,\phi_x(y))\, J_{\phi_x}^{\frac{n-2s}{2n}}(y)\, 
\left((-\Delta)^s \eta \right)(y)\, dy\\
&=& (1-|x|^2)^{(n-2s)/2}\int_B\left||x|y-\frac{x}{|x|} \right|^{2s-n}\, 
G_s(0,\phi_x(y))\,\left((-\Delta)^s \eta \right)(y)\, dy,
\end{eqnarray*}
hence
\begin{equation}\label{eq:Green_intermediate}
 \eta(x) = \int_B\left||x|y-\frac{x}{|x|} \right|^{2s-n}\, 
G_s(0,\phi_x(y))\,\left((-\Delta)^s \eta \right)(y)\, dy.
\end{equation}
Now, using that
$$
| \phi_x(y) | =\frac{|x-y| }{\left||x|y-\frac{x}{|x|} \right|}
$$
and 
$$
G_s(0,\phi_x(y))=k_{s,n}| \phi_x(y) |^{2s-n} \int_1^{1/| \phi_x(y) | } (v^2-1)^{s-1}v^{1-n}\, dv,
$$
we find that
\begin{eqnarray*}
\left||x|y-\frac{x}{|x|} \right|^{2s-n}\, G_s(0,\phi_x(y))&=&k_{s,n}|x-y|^{2s-n}
\int_1^{ \left||x|y-\frac{x}{|x|} \right|/|x-y|} (v^2-1)^{s-1}v^{1-n}\, dv\\
&=& G_s(x,y).
\end{eqnarray*}
This, together with \eqref{eq:Green_intermediate}, 
implies that, for any $\eta\in C^\infty_0(B)$ and any $x\in B$,
$$
\eta(x) = \int_B G_s(x,y)\, \left((-\Delta)^s \eta \right)(y)\, dy.
$$
This completes the proof of~\eqref{eq:LAK:LA}.

Now we take~$f\in C^\infty_0 (B)$ and $u$ as in~\eqref{eq:solution}. 
We show first that
\begin{equation}\label{P:we}
{\mbox{$u$ is a weak solution to~\eqref{sol f}.}}\end{equation}
To this end let $\varphi\in C^\infty_0 (B)$ and observe that 
$$
(-\Delta)^s \varphi=(-\Delta)^m (-\Delta)^\sigma \varphi 
=(-\Delta)^\sigma (-\Delta)^m\varphi .
$$
With the help of this and~\eqref{eq:LAK:LA} we conclude that
\begin{eqnarray*}
 \int_B u(x) (-\Delta)^s \varphi (x)\, dx &=& \int_B\int_B G_s(x,y) 
f(y)(-\Delta)_x^\sigma (-\Delta)_x^m  \varphi (x)\, dy\,dx\\
&=& \int_B \left( \int_B  (-\Delta)_x^\sigma G_s(x,y) \,  
(-\Delta)_x^m  \varphi (x)\, dx \right)\, f(y) \, dy\\
&=& \int_B\left( \int_B\delta_y(x) \varphi(x)\, dx \right)\, f(y) \, dy\\
&=& \int_B f(y) \varphi(y)\, dy,
\end{eqnarray*}
which proves~\eqref{P:we}.

We next show that $u$, extended outside $B$ by $0$, satisfies
\begin{equation}\label{TPR5}
u\in H^s(\mathbb{R}^n) \cap C^{m,\sigma} (\mathbb{R}^n)
\cap C^\infty (B)\end{equation} 
and so, in particular, $u$ vanishes of order $m$ on $\partial B$.

To prove~\eqref{TPR5},
we observe that Boggio's formula can be written as
\begin{equation}\label{def G-qw-qw}
G_s(x,y)=k_{s,n}\, |x-y|^{2s-n}\cdot (h\circ g)(x,y)
\end{equation}
with
\begin{equation}\label{def g}
g(x,y):= \frac{\left| |x|y-\frac{x}{|x|}\right| }{|x-y|}=\sqrt{1+\frac{(1-|x|^2)(1-|y|^2)}{|x-y|^2} }
\end{equation}
and
\begin{equation}\label{Def di h}
h(t):=\int_0^t (v^2-1)^{s-1}_+\, v^{1-n}\, dv.
\end{equation}
Notice that
$$
h(t)=\frac{1}{2} (t^2-1)_+^s \int^1_0 \tau^{s-1} \left(1+(t^2-1)\tau)^{-n/2} \right)\, d\tau
$$
where
$$
(0,\infty)\ni t\mapsto \frac{1}{2}\int^1_0 \tau^{s-1} \left(1+(t^2-1)\tau)^{-n/2} \right)\, d\tau
$$
is smooth and can be modified on $[0,1)$ to a $C^\infty $-smooth function
which is identically $0$ for $t$ close to $0$. Hence we may write
\begin{equation}\label{eq:structure_h}
 h(t)= (t^2-1)_+^s\cdot \tilde{h} (t), \quad {\mbox{ with }}  \tilde{h}\in C^\infty ([0,\infty))
\quad {\mbox{ and }}\quad  \tilde{h}|_{[0,1/2]}=0.
\end{equation}

Furthermore, since we assume that $f\in C^\infty_0(B)$, 
we may find some $\delta >0$ such that $f\in
C^\infty_0(B_{1-\delta}(0))$. Hence, 
from now on,
we take
$$
y\in B_{1-\delta}(0).
$$
As in~\cite[Lemma 4.4]{GGS}, 
we consider the sets
$$
A:= \overline{B}\times B_{1-\delta}(0), \quad 
C:=\overline{B}\times\overline{B}, \quad 
D:= \{(x,x): x\in \overline{B}\}, \quad 
$$
$$
F:=C\cap \left\{\left| |x|y-\frac{x}{|x|}\right|\le 3|x-y| \right\}, \quad 
J:=C\cap \left\{\left| |x|y-\frac{x}{|x|}\right|\ge 2 |x-y| \right\}
$$
and we define~$d(x):=\operatorname{dist} (x,\partial B)$.

We take into account first the case in which~$(x,y) \in A\cap F$. 
According to~\cite[Lemma 4.4]{GGS}, we have that
$$
d(x)=1-|x| \le c |x-y|\quad{\mbox{ and }} \quad d(y)\le c |x-y|,
$$
for some~$c\ge1$.
Since  $d(y)\ge \delta$, it follows here that \begin{equation}\label{ghA1}
|x-y|\ge \frac{\delta}{c}.\end{equation}
Accordingly, by~\eqref{def g}, we see that
\begin{equation}\label{de g sar:2}
g\in C^\infty (A\cap F)
\end{equation}
and all its derivatives are bounded in~$A\cap F$. Analogously,
\begin{equation}\label{THES}
 {\mbox{the function~$A\cap F\ni (x,y)\mapsto |x-y|^{2s-n}$
belongs to $C^\infty(A\cap F)$.}}
\end{equation}

Using this  and~\eqref{eq:structure_h}, we can write in~$A\cap F$
\begin{equation}\label{87uJAJJJJA}
\begin{split}
G_s(x,y)\;&=k_{s,n}\, |x-y|^{2s-n}\; h(g(x,y))\\
&= k_{s,n}\, |x-y|^{2s-n}\;
\left((1-|x|^2)_+^s   (1-|y|^2)^s |x-y|^{-2s} \cdot \tilde{h}(g(x,y)) \right)\\
&=  (1-|x|^2)_+^s\; g^\sharp(x,y),\end{split}\end{equation}
with some  $g^\sharp\in C^\infty(A\cap F)$.

We consider next the case in which~$(x,y) \in  A\cap (J\setminus  D)$. 
Here, we have that
$\frac{1}{c} d(y)\le d(x)\le c d(y)$, for some~$c\ge1$,
so that in particular 
\begin{equation}\label{eq:important}
d(x)\ge \frac{\delta}{c}>0
\end{equation}
and $\left| |x|y-\frac{x}{|x|} \right|\ge 1-|x|\, |y| \ge \frac{1}{c}$. 
As a consequence, in this case we may rewrite Boggio's formula
as follows:
\begin{eqnarray*}
 &&G_s (x,y)\\ &=& k_{s,n} |x-y|^{2s-n} \int_1^2
\left( v^2-1\right)^{s-1}v^{1-n}\, dv\\
&&\qquad + k_{s,n} |x-y|^{2s-n} \int_2^{\left| |x|y-\frac{x}{|x|}\right|/|x-y|}
\left( v^2-1\right)^{s-1}v^{1-n}\, dv\\
&=& c_1 |x-y|^{2s-n} + k_{s,n} |x-y|^{2s-n} \int_2^{\left| |x|y-\frac{x}{|x|}\right|/|x-y|}
v^{2s-1-n} \left( 1-\frac{1}{v^2}\right)^{s-1}\, dv\\
&=& c_1 |x-y|^{2s-n} + k_{s,n} |x-y|^{2s-n} 
\sum^\infty_{k=0}\int_2^{\left| |x|y-\frac{x}{|x|}\right|/|x-y|}
(-1)^k \left( s-1\atop k\right)v^{2s-2k-1-n}\, dv,
\end{eqnarray*}
for some~$c_1>0$.
Observe that on $A\cap J$ one has local uniform convergence of the second summand
and all its derivatives, and this justifies the exchange in 
the order of integration and summation
that we have performed here above. Hence, we can integrate the powers of~$v$
in the previous formula and end up with
$$
G_s (x,y)=c_2 |x-y|^{2s-n} +c_3\sum^\infty_{k=0}(-1)^k \left( s-1\atop k\right)\frac{1}{2s-n-2k} 
\left| |x|y -\frac{x}{|x|}\right|^{2s-2k-n}|x-y|^{2k} ,
$$
for some~$c_2$, $c_3 \in \R$. 
Only in case that $2s-2k-n=0$ (which may occur only if $s$ is a multiple of $\frac{1}{2}$)
the corresponding summand has to be replaced by 
a multiple of $\log\left(\frac{\left| |x|y -\frac{x}{|x|}\right|}{|x-y|} \right) |x-y|^{2k}$.
In consequence of this, we conclude that
in this case~$G_s (x,y)$ is the sum of~$c_2 |x-y|^{2s-n}$ 
and an analytic function, that we denote by~$H^\star(x,y)$, 
provided that $s-\frac{n}{2}\not\in \mathbb{N}_0$.
Notice that if $s-\frac{n}{2} \in \mathbb{N}_0$ 
we have a logarithmic singularity instead, which is treated analogously.

That is, we can write
\begin{equation}\label{0oKA} G_s(x,y)=c_2 |x-y|^{2s-n} + H^\star(x,y),\end{equation}
with~$H^\star(x,y)\in C^\infty(A\cap J)$.

We also set
\begin{eqnarray*} &&
{\mathcal{D}}_{1,x}:=\{ y\in B_{1-\delta}(0) {\mbox{ s.t. }} (x,y)\in A\cap F\}\\
{\mbox{and }}&&
{\mathcal{D}}_{2,x}:=\{ y\in B_{1-\delta}(0) {\mbox{ s.t. }} (x,y)\not\in A\cap F\}
.\end{eqnarray*}
With the help of (\ref{eq:important}) we see that ${\mathcal{D}}_{2,x}=\emptyset$
for $d(x)$ close to $0$.

Using this notation, we have that
\begin{eqnarray*}
&& \int_{{\mathcal{D}}_{2,x}}|x-y|^{2s-n} \;f(y)\,dy\\
&=& \int_{B_{1-\delta}(0)}|x-y|^{2s-n} \;f(y)\,dy-
\int_{B_{1-\delta}(0)\setminus{\mathcal{D}}_{2,x}}|x-y|^{2s-n} \;f(y)\,dy
\\ &=& \int_{\R^n}|x-y|^{2s-n} \;f(y)\,dy+Z(x)\\
&=& \int_{\R^n}|y|^{2s-n} \;f(x-y)\,dy+Z(x)\\
&=& Z^\star(x),
\end{eqnarray*}
for suitable~$Z$, $Z^\star\in C^\infty(\R^n)$, thanks to~\eqref{THES}.

{F}rom this, \eqref{87uJAJJJJA}
and~\eqref{0oKA}, we find that
\begin{eqnarray*}
u(x) &=& \int_{{\mathcal{D}}_{1,x}} G_s(x,y)\;f(y)\,dy+
\int_{{\mathcal{D}}_{2,x}} G_s(x,y)\;f(y)\,dy\\
&=&  (1-|x|^2)_+^s\; \int_{{\mathcal{D}}_{1,x}} g^\sharp(x,y)\;f(y)\,dy\\
&&\qquad+c_2
\int_{{\mathcal{D}}_{2,x}}|x-y|^{2s-n} \;f(y)\,dy+ 
\int_{{\mathcal{D}}_{2,x}}H^\star(x,y)\;f(y)\,dy\\
&=& (1-|x|^2)_+^s\,\tilde u(x),
\end{eqnarray*}
with some suitable~$\tilde{u}\in C^{\infty} (\overline{B})$.
This shows that $u\in C^{m,\sigma} (\overline{B}) \cap C^\infty (B)$ and also,
recalling Corollary~\ref{TRE}, that~$u\in H^s(\R^n)$, 
which completes the proof of~\eqref{TPR5}, and in turn of Theorem~\ref{TH}.
\end{proof}

\section*{Acknowledgements} 
This work has been supported by a Postdoc-fellowship 
of the Alexander von Humboldt Foundation
for the first author.

Serena Dipierro\\
School of Mathematics and Statistics,
University of Melbourne, 813 Swanston St,\\ Parkville VIC 3010, Australia,\\ and\\
School of Mathematics and Statistics,
University of Western Australia,\\
35 Stirling Highway,
Crawley, Perth
WA 6009, Australia\\
e--mail:  sdipierro@unimelb.edu.au \par\noindent
Hans--Christoph Grunau\\
Fakult\"at f\"ur Mathematik, Otto--von--Guericke--Universit\"at, Postfach 4120,\\
39016 Magdeburg, Germany\\
e--mail: hans-christoph.grunau@ovgu.de

\end{document}